\documentclass[11pt]{article}
\usepackage{latexsym,amssymb,amsmath,amsfonts,amsthm}
\usepackage{cite}
\usepackage{enumerate}
\usepackage{cases}
\usepackage{graphics}
\usepackage{graphicx}
\usepackage{mathrsfs}
\usepackage{color}
\usepackage{bm}
\usepackage{subfigure}
\topmargin =0mm \headheight=0mm \headsep=0mm
\textheight =220mm \textwidth =160mm
\oddsidemargin=0mm\evensidemargin =0mm
\sloppy \brokenpenalty=10000

\newcommand{\R}{{\mat R}}

\newcommand{\N}{{\mat N}}
\newcommand{\C}{{\mat C}}
\newcommand{\E}{{\mat E}}

\newcommand{\be}{\begin{eqnarray}}
\newcommand{\ben}{\begin{eqnarray*}}
\newcommand{\en}{\end{eqnarray}}
\newcommand{\enn}{\end{eqnarray*}}

\newcommand{\dive}{{\rm div\,}}

\newcommand{\mat}{\mathbb}

\newcommand{\n}{\bm n}

\newcommand{\0}{\bm{0}}
\newtheorem{theorem}{Theorem}[section]
\newtheorem{lem}[theorem]{Lemma}

\newtheorem{remark}[theorem]{Remark}

\begin{document}
\renewcommand{\theequation}{\arabic{section}.\arabic{equation}}

\title{\bf
Analysis of a time-dependent fluid-solid interaction problem above a local rough surface}
\author{Changkun Wei\thanks{Academy of Mathematics and Systems Science, Chinese Academy of Sciences, Beijing 100190, China and School of Mathematical Sciences, University of Chinese Academy of  Sciences, Beijing 100049, China ({\tt weichangkun@amss.ac.cn})}
\and
Jiaqing Yang\thanks{The Corresponding Author, School of Mathematics and Statistics, Xi'an Jiaotong University,
Xi'an, Shaanxi, 710049, P. R. China ({\tt jiaq.yang@xjtu.edu.cn})}
}
\date{}

\maketitle


\begin{abstract}
This paper is concerned with the mathematical analysis of time-dependent fluid-solid interaction problem associated with a bounded elastic body immersed in a homogeneous air or fluid above a local rough surface. We reformulate the unbounded scattering problem into an equivalent initial-boundary value problem defined in a bounded domain by proposing a transparent boundary condition (TBC) on a hemisphere. Analyzing the reduced problem with Lax-Milgram lemma and abstract inversion theorem of Laplace transform, we prove the well-posedness and stability for the reduced problem. Moreover, an a priori estimate is established directly in the time domain for the acoustic wave and elastic displacement with using the energy method.

\vspace{.2in}
{\bf Keywords:} Well-posedness, stability, a priori estimate, fluid-solid interaction, Laplace transform, local rough surface
\end{abstract}

\section{Introduction}
\setcounter{equation}{0}

 The interaction between an elastic body and a compressible, inviscid fluid is generally referred to as a fluid-solid interaction which can be mathematically formulated as an initial-boundary value transmission problem. The study on this problem has been a subject of interest in both the mathematical and engineering community; see
 \cite{Donea1982,GLZ17,Hamdi1986,Hsiao1989,Hsiao1994,Hsiao17,LM95,MO95} and the references therein. However, most of the investigations study typical fluid-solid interaction problems confined to the time-harmonic setting. Two kinds of methods are usually used to
 prove the well-posedness of the scattering problems in the literature, including the boundary integral equation and variational techniques. For example, the existence
 of a solution was established in \cite{LM95}  with using the boundary integral equation method in the case of non-Jones frequencies for the elastic field, while the similar results were
 established in \cite{Hsiao1989,Hsiao2000} with using the various variational formulations.
Additionally, numerical solutions can be found in \cite{AH1988,everstine1990} and \cite{EH2010, HuRY2016} 
on the fluid-solid interaction problem with the boundary element or finite element. And the related inverse problems were also studied
numerically in \cite{hu2016,Yin2016} with the factorization method for imaging a periodic
interface or a bounded obstacle.

However, in most of real-world problems, the model setting not only depends on the space, but also
 depends on the time. This class of problems
have recently attracted much attention due to their capability of capturing wide-band signals and modeling more general material and nonlinearity \cite{ChenMonk2014,Jiao2002,Li2012,WW2014,Wang2012,Zhao2013}. 
Precisely, the mathematical analysis can be found in
\cite{chen2009,WW2014} for time-dependent scattering problems in the full acoustic wave cases,
and
\cite{chen2008,Gao2016,GL17,LWW15} in the
full electromagnetic wave cases, where bounded or unbounded scatterers were considered.
However, there exists few works on the time-dependent fluid-solid interaction
problems in the literature, especially for the unbounded rough surfaces. We here refer to \cite{Bao2018}
for a bounded elastic body in the two-dimensional case, and
\cite{GLZ17} for an unbounded layered structure in the three-dimensional case.
A key role in above works is played that a time domain transparent boundary condition (TBC) was proposed which can reduce the model problem in an unbounded domain into the bounded or infinite rectangular slab case. And
the reduced problem can be proved to be well-posed by the variational method in combination with
the Laplace transform and its inversion.
Moreover, a different technique was also proposed in \cite{Hsiao17} by the coupling of the boundary
integral equations and the variational formulation, in order to obtain the well-posedness of the scattering
problem.
Furthermore, several numerical studies were provided in \cite{EA91,FKW06,Soares2006} based on the mathematical analysis in \cite{Hsiao17} or the full boundary integral formulations.

In this paper, we intend to study the well-posedness and stability of the time-dependent fluid-solid interaction problem in three dimensions associated with a bounded elastic body embedded in the upper half-space with a local rough surface. Part of this work is motivated by the work \cite{LWW15} on the scattering by a three-dimensional open cavity governing by Maxwell's equations in time domain. A time
domain Transparent boundary condition (TBC) is introduced, defined on a hemisphere, which reduces
the model problem in the unbounded domain into the bounded one. The well-posedness and
stability of the reduced problem can be thus proved by the variational method in combination with Laplace transform and its inversion. Moreover, an a priori stability estimate of the solution is also obtained by using the energy method, which can provide an explicit
dependence on the time variable.

The outline of this paper is as follows. In section 2, we formulate our problem and put forward an exact transparent boundary condition (TBC) to reformulate the unbounded scattering problem into an equivalent  initial-boundary value problem in a bounded domain. In section 3, we study the well-posedness and stability for the reduced problem by the variation method, and also provide an a priori estimate for the acoustic field and elastic displacement with using the energy method.

\section{Problem formulation}\label{sec2}
\setcounter{equation}{0}
In this section, we formulate our problem by introducing model equations for the acoustic and elastic waves, present a transmission condition for the fluid-solid interaction
. In addition, an exact time-domain transparent boundary condition (TBC) shall be introduced which can reformulate the scattering problem into an initial-boundary value problem in a bounded domain, and some further properties on TBC will be also presented.

\subsection{An initial-boundary value transmission problem}
We consider a time-dependent fluid-solid interaction problem, which can be formulated as follows: A plane incident acoustic wave propagates in a fluid domain above a local rough surface in which a bounded elastic body is immersed; see Figure $1$. The forward problem is to determine the scattered pressure and velocity fields in the fluid domain as well as the displacement field in the elastic domain at any time.
\begin{figure}[!htbp]\label{fig1}
\setcounter{subfigure}{0}
  \centering
  \includegraphics[width=2.5in]{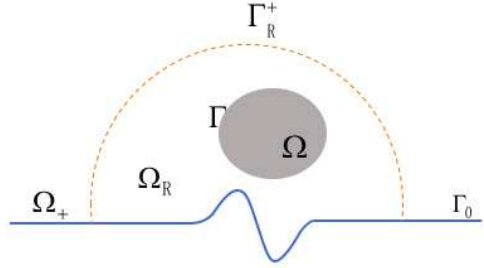}
\caption{Geometric configuration of the scattering problem}
\end{figure}

Throughout our paper, let $\Omega$ be the bounded homogeneous and isotropic elastic body with a Lipschitz boundary $\Gamma:=\partial\Omega$ in the unbounded domain $\Omega_+$, where $\Omega_+:=\{x\in\R^3:x_3>f_{\Gamma_0}(x_1,x_2),(x_1,x_2)\in \R^2\}$ with the boundary $\Gamma_0 := \partial\Omega_+ = \{x\in\R^3:x_3=f_{\Gamma_0}(x_1,x_2),(x_1,x_2)\in \R^2\}$ characterized by some smooth function $f_{\Gamma_0}\in C^2(\R^2)$ which has compact support in $\R^2$. We assume that the elastic body $\Omega$ is described by a constant mass density $\rho_e>0$ and let $\Omega^c = \Omega_+ \setminus \bar{\Omega}$ denote its exterior occupied by a compressible fluid with constant density $\rho_0>0$. Denote by $B^+_R:=\{x\in\Omega_+:|x|<R\}$ the half-ball and $\Gamma_R^+:=\{x\in\Omega_+:|x|=R\}$ the upper hemisphere, where $R>0$ is large enough such that $(\bar{\Omega}\cup supp f_{\Gamma_0})\subset B^+_R$. Let $\Omega_R=B^+_R\setminus\bar{\Omega}$ be the bounded region between $\Gamma_0$ ,$\Gamma$ and $\Gamma_R^+$. In what follows, we let $\n$ denote the unit outward normal vector on $\Gamma$ and $\Gamma_R^+$, directed into the exterior of the $\Omega$ and $B_R^+$, respectively. And, we define $\C_+:=\{s=s_1+is_2\in\C: s_1,s_2\in\R\;{\rm with\;}s_1>0\}.$\\

\textbf{Elastic domain}.\ In the elastic body $\Omega$, the elastic displacement $\bm u$ is governed by the linear elastrodynmic equation:
\begin{eqnarray}\label{2.1}
\rho_e\frac{\partial^{2}\bm u}{\partial t^{2}}-\Delta^*\bm u=0,\qquad (x,t)\in\Omega\times(0,\infty)
\end{eqnarray}
where $\Delta^*$ is the Lam\'{e} operator defined as follows
\begin{align*}
\Delta^*\bm u
:= \mu\Delta\bm u+(\lambda+\mu)\nabla\dive\bm u
 = \dive\bm\sigma(\bm u).
\end{align*}
In above,
$\bm \sigma(\bm u)$ and $\bm \varepsilon(\bm u)$ are called stress and strain tensors respectively, which are given by
$$\bm \sigma(\bm u)=(\lambda \dive\bm u)\bm{I}+2\mu\bm\varepsilon(\bm u)\qquad\text{and}\qquad   \bm\varepsilon(\bm u)=\frac{1}{2}(\nabla\bm u+(\nabla\bm u)^{T}).$$
Furthermore, Lam\'{e} constants $\lambda$ and $\mu$ are assumed to satisfy the condition that $\mu \geq 0$ and $3\lambda + 2\mu \geq 0.$ \\

\textbf {Fluid domain}.\ In the unbounded irrotational fluid domain $\Omega^c$, the scattered pressure $p$ and velocity fields $v$ can be represented by the velocity potential
$\varphi:=\varphi(x,t)$ as follows (cf. \cite{Hsiao17})
$$v=-\nabla\varphi,\quad\text{and}\quad p=\rho_0\frac{\partial\varphi}{\partial t},$$
where $\varphi$ is governed by the linear wave equation
\begin{eqnarray}\label{2.2}
\frac{\partial^{2}\varphi}{\partial t^{2}}-c^2\Delta\varphi=0\quad {\rm in}\;\; \Omega^c\times(0,\infty),
\end{eqnarray}
with the constant sound speed $c$.\par
Consider an incoming plane wave of the form
$$
\varphi^{inc}(x,t)=f(x\cdot d-ct),\quad x\in\Omega^c,\;t>0
$$
with a smooth function $f$ which is assumed to be of $C^k$-class $(k\geq3)$. Here, $d=(\sin\phi \cos\theta,\sin\phi \sin\theta,\cos\phi)\in S^2_-:=S^2\cap\R^3_-$ is the incident direction, $(\theta,\phi)$ are Euler angles with $\pi/2<\phi<\pi$ and $0<\theta<2\pi$, and $S^2=\{x\in\R^3:\ |x|=1\}$ is the unit sphere. It is easily checked that $\varphi^{inc}(x,t)$ satisfies acoustic wave equation (\ref{2.2}).
Moreover, we assume that the total field $\varphi=\varphi^{inc}+\varphi^{ref}+\varphi^{sc}$ vanishes on the surface $\Gamma_0$ known as Dirichlet boundary condition:
\begin{eqnarray}\label{2.3}
\varphi=\varphi^{inc}+\varphi^{ref}+\varphi^{sc}=0\quad{\rm on}\;\;\Gamma_0,
\end{eqnarray}
where $\varphi^{ref}(x,t): =-f(x\cdot d'-ct)$ is the reflected wave of $\varphi^{inc}$ by the infinite plane $\{x_3=0\}$ with $d'=(\sin\phi \cos\theta,\sin\phi \sin\theta,-\cos\phi)$, and $\varphi^{sc}$ is the scattered wave which satisfies the sommerfeld radiation condition
\begin{align}\label{2.4}
\partial_r\varphi^{sc}+\frac{1}{c}\partial_t\varphi^{sc}=o(\frac{1}{r}),\quad \text{as}\;\; r=|x|\rightarrow \infty,t>0.
\end{align}
Let $\varphi^{0}:=\varphi^{inc}+\varphi^{ref}$, and it holds that $\varphi^0$ has compact support on $\Gamma_0$ since $\Gamma_0$ is just a local perturbation of the infinite plane $\{x_3=0\}$.\par
In addition, the elasticity medium and fluid medium
are coupled in two distinct ways resulting in two kinds of interface conditions on $\Gamma$ \cite{Hsiao17}:\\
(i):\ kinematic interface condition
\begin{eqnarray}\label{2.5}
\frac{\partial \bm u}{\partial t}\cdot\n=-\frac{\partial \varphi}{\partial\n},
\end{eqnarray}
(ii):\ dynamic interface condition
\begin{eqnarray}\label{2.6}
\bm{\sigma(u)n}=-\rho_0\frac{\partial\varphi}{\partial t}\n.
\end{eqnarray}

The time-dependent scattering problem can be now modelled by combining (\ref{2.1}) for the elastic displacement field $\bm{u}$ and (\ref{2.2}) for velocity potential $\varphi$ together with the interface conditions (\ref{2.5})-(\ref{2.6}) as well as the homogeneous initial conditions
\begin{eqnarray}\label{2.7}
\bm u(x,0)=\frac{\partial \bm u(x,0)}{\partial t}=\0,\ x \in \Omega\quad \text{and} \quad  \varphi(x,0)=\frac{\partial \varphi(x,0)}{\partial t}=0,\ x\in \Omega^c,
\end{eqnarray}
which can be formulated as the following PDE-system:
\begin{equation}\label{2.8}
\begin{cases}
\rho_e\frac{\partial^{2}\bm u}{\partial t^{2}}-\Delta^*\bm u=0,& {\rm in}\;\;\Omega, t>0 \\
\frac{\partial^{2}\varphi}{\partial t^{2}}-c^2\Delta\varphi=0,& {\rm in}\;\;\Omega^c, t>0\\
\bm u(x,0)=\frac{\partial \bm u(x,0)}{\partial t}=\textbf 0,& {\rm in}\;\; \Omega\\
\varphi(x,0)=\frac{\partial \varphi(x,0)}{\partial t}=0,& {\rm in}\;\; \Omega^c\\
\bm{\sigma(u)\n}=-\rho_0\frac{\partial \varphi}{\partial t}\n,& {\rm on}\;\;\Gamma, t>0 \\
\frac{\partial \bm u}{\partial t}\cdot\n=-\frac{\partial \varphi}{\partial \n},& {\rm on}\;\; \Gamma, t>0\\
\varphi=\varphi^{inc}+\varphi^{ref}+\varphi^{sc}=0,& {\rm on}\;\;\Gamma_0, t>0\\
\partial_r\varphi^{sc}+\frac{1}{c}\partial_t\varphi^{sc}=o(\frac{1}{r}).&\text{as}\;\;r=|x|\rightarrow \infty,t>0.
\end{cases}
\end{equation}
\subsection{Transparent boundary condition on a hemisphere.} In this subsection, we aim to propose
 a transparent boundary condition (TBC) on a hemiphere which can reduce the model problem (\ref{2.8})
 into the case in the bounded domain $B_R^+$ with the sommerfeld radiation condition (\ref{2.4})
 replaced by
 \begin{eqnarray}\label{2.9}
\partial_{\n} \varphi =\mathscr{T}[\varphi]+\rho,\quad {\rm on}\;\;\Gamma_R^+, t>0,
\end{eqnarray}
then we obtain the equivalent reduced sysytem
\begin{equation}\label{reduced}
\begin{cases}
\rho_e\frac{\partial^{2}\bm u}{\partial t^{2}}-\Delta^*\bm u=0,& {\rm in}\;\;\Omega, t>0 \\
\frac{\partial^{2}\varphi}{\partial t^{2}}-c^2\Delta\varphi=0,& {\rm in}\;\;\Omega_R, t>0\\
\bm u(x,0)=\frac{\partial \bm u(x,0)}{\partial t}=\textbf 0,& {\rm in}\;\;  \Omega\\
\varphi(x,0)=\frac{\partial \varphi(x,0)}{\partial t}=0,& {\rm in}\;\;  \Omega_R\\
\bm{\sigma(u)n}=-\rho_0\frac{\partial \varphi}{\partial t}\n,& {\rm on}\;\;\Gamma, t>0 \\
\frac{\partial \bm u}{\partial t}\cdot\n=-\frac{\partial \varphi}{\partial\n},& {\rm on}\;\;\Gamma, t>0\\
\varphi=0,&{\rm on}\;\;\Gamma_0, t>0\\
\partial_{\n} \varphi=\mathscr{T}[\varphi]+\rho,& {\rm on}\;\;\Gamma_R^+, t>0.
\end{cases}
\end{equation}
Here, (\ref{2.9}) is called the TBC in time domain and $\rho:=\partial_{\n} \varphi^0-\mathscr{T}[\varphi^0].$\par

In what follows,  we will derive the explicit representation of the operator $\mathscr{T}$ and then present some basic properties. To this end, we take the Laplace transforms
$\check{\varphi}(x,s)=\mathscr{L}(\varphi)(x,s)$ and $\check{\bm u}(x,s)=\mathscr{L}(\bm u)(x,s)$ of $\varphi(x,t)$ and $\bm u(x,t)$, respectively, in (\ref{reduced}) with respect to $t$ (see related definitions on the Laplace transform in Appendix A). Then (\ref{2.8}) can be reduced to the time harmonic
fluid-solid interaction problem in s-domain:
\begin{equation}\label{2.10}
\begin{cases}
\Delta^*\check{\bm u}-\rho_es^2\check{\bm u}=0,& {\rm in}\;\; \Omega\\
\Delta\check{\varphi}-\frac{s^{2}}{c^{2}}\check{\varphi}=0,& {\rm in}\;\; \Omega_R\\
\bm{\sigma(\check u)n}=-\rho_0s\check{\varphi}\n,& {\rm on}\;\; \Gamma\\
s\check{\bm u}\cdot \n=-\frac{\partial \check{\varphi}}{\partial \n},& {\rm on}\;\; \Gamma\\
\check{\varphi}=0,& {\rm on}\;\;\Gamma_0\\
\partial_{\n} \check{\varphi}=\mathscr{B}[\check{\varphi}]+\check{\rho},& {\rm on}\;\; \Gamma_R^+
\end{cases}
\end{equation}
where $s\in\C_+$, and $\mathscr{B}$ is the Dirichlet-to-Neumann (DtN) operator in s-domain which will be defined by spherical harmonics expansion later. It follows from (\ref{2.9}) that
$\mathscr{T}=\mathscr{L}^{-1}\circ\mathscr{B}\circ\mathscr{L}$.\par
Let $Y_n^m(\hat{x}), m=-n,...n, n=1,2,...,$ be the spherical harmonics which forms a complete orthonormal basis on $L^2(S^2)$. It then follows from \cite[Lemma 3.1]{LWZ12} that
$X_n^m(\theta,\varphi): =\frac{\sqrt{2}}{R}Y_n^m(\theta,\varphi)$, $|m|\leq n, m+n = odd, n\in\N$, form a complete orthonormal system on $L^2(\Gamma_R^+)$. Noting that $\check{\varphi}^0=\mathscr{L}(\varphi^0)$ satisfies the Helmholtz equation $\Delta\check{\varphi}^0-\frac{s^{2}}{c^{2}}\check{\varphi}^0=0$, so it follows from the radiation condition (\ref{2.4}) that the scattered field $\check{\varphi}^{sc}=\mathscr{L}(\varphi^{sc})$ has the following expansion
\begin{eqnarray}\label{2.11}
\check{\varphi}^{sc}=\sum_{|m|\leq n}^{odd}\alpha_n^m h_n^{(1)}(\frac{is}{c}r)X_n^m(\theta,\varphi),\ {\rm in}\;\; \Omega_+\setminus B_R^+,
\end{eqnarray}
where $\sum_{|m|\leq n}^{odd}\omega_n^m:=\sum_{n=1}^{\infty}\sum_{\mathop{{m=-n}}\limits_{m+n=odd}}^{n}\omega_n^m$ for any sequence $\{\omega_n^m\}$. Therefore, one has
\begin{eqnarray}\label{2.12}
\partial_{\n}\check{\varphi}^{sc}|_{\Gamma_R^+}=\sum_{|m|\leq n}^{odd}a_n^m\gamma_nX_n^m(\theta,\varphi),
\end{eqnarray}
where $a_n^m:=\alpha_n^m h_n^{(1)}(\frac{is}{c}R)$ and $\gamma_n:=\frac{is}{c} {h_n^{(1)}}^{'}(\frac{is}{c}R)/h_n^{(1)}(\frac{is}{c}R)$ makes sense since $h_n^{(1)}(z)$ has no zeros (cf. \cite[Appendix C]{LWW15}). This leads to that the DtN operator $\mathscr{B}:H^{1/2}(\Gamma_R^+)\rightarrow H^{-1/2}(\Gamma_R^+)$ can be defined by
\begin{eqnarray}\label{2.13}
\mathscr{B}\omega=\sum_{|m|\leq n}^{odd}\gamma_n\beta_n^m X_n^m(\theta,\varphi)\quad for\;\;\omega=\sum_{|m|\leq n}^{odd}\beta_n^m X_n^m.
\end{eqnarray}
which is clearly proved to be bounded from $H^{1/2}(\Gamma_R^+)$ to $H^{-1/2}(\Gamma_R^+)$ with using the asymptotic behavior of $h_n^{(1)}$ as $n\to\infty$.

By (\ref{2.12}), it is now verified that the total field $\check{\varphi}$ satisfies the TBC in $s$-domain in
(\ref{2.10}) with $\check{\rho}:=\partial_{\n}\check{\varphi}^0-\mathscr{B}\check{\varphi}^0$.
Taking the inverse Laplace transform of the last equality in (\ref{2.10}) yields (\ref{2.9}) in the time-domain.

\begin{lem}\label{lem2.1}
For any $\omega\in H^{1/2}(\Gamma_R^+)$, it holds
$$-Re\langle s^{-1}\mathscr{B}\omega,\omega\rangle_{\Gamma_R^+}\geq 0.$$
\end{lem}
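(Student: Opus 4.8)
The plan is to exploit the spherical-harmonic diagonalization of $\mathscr{B}$ and reduce the claim to a single-mode inequality, which I would then establish by an energy (Green's) identity in the unbounded exterior region. First, write $\omega=\sum_{|m|\leq n}^{odd}\beta_n^m X_n^m$ and use that $\{X_n^m\}$ is orthonormal on $L^2(\Gamma_R^+)$. Since $\mathscr{B}X_n^m=\gamma_n X_n^m$ by (\ref{2.13}), the pairing decouples,
\[
\langle s^{-1}\mathscr{B}\omega,\omega\rangle_{\Gamma_R^+}=s^{-1}\sum_{|m|\leq n}^{odd}\gamma_n|\beta_n^m|^2 ,
\]
so that $-\mathrm{Re}\langle s^{-1}\mathscr{B}\omega,\omega\rangle_{\Gamma_R^+}=\sum_{|m|\leq n}^{odd}\big(-\mathrm{Re}(s^{-1}\gamma_n)\big)|\beta_n^m|^2$. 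Hence it suffices to prove $\mathrm{Re}(s^{-1}\gamma_n)\leq0$ for every admissible mode, and the desired nonnegativity follows by summing nonnegative terms.

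Next I would realize $\gamma_n$ through the exterior radiating solution. Fix a mode and set $u(x):=\frac{h_n^{(1)}(\frac{is}{c}|x|)}{h_n^{(1)}(\frac{is}{c}R)}X_n^m(\hat x)$, which solves $\Delta u-\frac{s^2}{c^2}u=0$ in the exterior half-region $D:=\{x\in\Omega_+:|x|>R\}$ and satisfies $u|_{\Gamma_R^+}=X_n^m$ and $\partial_{\n}u|_{\Gamma_R^+}=\gamma_n X_n^m$. Because $f_{\Gamma_0}$ has compact support inside $B_R^+$, the surface $\Gamma_0$ coincides with the plane $\{x_3=0\}$ outside $B_R^+$; the parity constraint $m+n=odd$ makes $X_n^m$ odd under $x_3\mapsto-x_3$, so $u$ vanishes on this flat part of $\partial D$, matching the Dirichlet condition. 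Applying Green's first identity on the truncated domain $D_\rho:=D\cap\{|x|<\rho\}$ gives
\[
\frac{s^2}{c^2}\int_{D_\rho}|u|^2\,dx+\int_{D_\rho}|\nabla u|^2\,dx=\int_{\partial D_\rho}\partial_\nu u\,\bar u\,ds ,
\]
where the flat boundary contributes nothing (since $u=0$ there), and the inner hemisphere $\Gamma_R^+$, whose outward normal for $D$ is $-\n$, yields $-\gamma_n\|X_n^m\|_{L^2(\Gamma_R^+)}^2=-\gamma_n$.

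Then I would pass to the limit $\rho\to\infty$. For $s\in\C_+$ one has $\frac{is}{c}=\frac{-s_2+is_1}{c}$, so $h_n^{(1)}(\frac{is}{c}r)$ decays like $e^{-s_1 r/c}$; consequently $u\in H^1(D)$ and the integral over the outer hemisphere $\{|x|=\rho\}$ tends to $0$. This leaves the identity $\gamma_n=-\frac{s^2}{c^2}\int_{D}|u|^2\,dx-\int_{D}|\nabla u|^2\,dx$. Dividing by $s$ and taking real parts, and using $\mathrm{Re}\,s=s_1>0$ together with $\mathrm{Re}(s^{-1})=s_1/|s|^2>0$, I obtain
\[
\mathrm{Re}(s^{-1}\gamma_n)=-\frac{s_1}{c^2}\int_{D}|u|^2\,dx-\frac{s_1}{|s|^2}\int_{D}|\nabla u|^2\,dx\leq0 ,
\]
which is exactly the single-mode inequality, and summing over modes completes the proof.

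The main obstacle will be the rigorous justification of the exterior integration by parts on the noncompact region $D$: one must confirm the exponential decay of $h_n^{(1)}(\frac{is}{c}r)$ for $s\in\C_+$ so that the far-field boundary integral genuinely vanishes, and must treat the unbounded flat portion $\{x_3=0,\,|x|>R\}$ of $\partial D$ correctly, where the odd-parity choice $m+n=odd$ is essential. A fully algebraic alternative would be to verify $\mathrm{Re}(s^{-1}\gamma_n)\leq0$ directly from the Wronskian and monotonicity properties of the Hankel functions, but the energy identity above is more transparent and anticipates the variational estimates needed for the full coupled system.
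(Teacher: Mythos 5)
Your proposal is correct, and it diverges from the paper precisely at the key step. Both arguments begin the same way: expand $\omega$ in the basis $X_n^m$, use (\ref{2.13}) to diagonalize $\mathscr{B}$, and reduce the claim to the single-mode inequality $\mathrm{Re}(s^{-1}\gamma_n)\leq 0$. At that point the paper simply quotes Lemmas 2.3 and 2.4 of the Chen--N\'ed\'elec reference \cite{chen2008} on the logarithmic derivative $r_n^{(1)}(z)=z{h_n^{(1)}}'(z)/h_n^{(1)}(z)$, which even yield the quantitative bound $\mathrm{Re}\{\frac{iR}{c}\frac{{h_n^{(1)}}'(\frac{is}{c}R)}{h_n^{(1)}(\frac{is}{c}R)}\}\leq -s_1/|s|^2$. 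You instead prove the mode inequality from scratch: you realize $\gamma_n$ as the Neumann datum of the explicit exterior solution $h_n^{(1)}(\frac{is}{c}r)X_n^m/h_n^{(1)}(\frac{is}{c}R)$, apply Green's identity on truncated domains, kill the far-field term using the exponential decay $|h_n^{(1)}(\frac{is}{c}r)|\sim \frac{c}{|s|r}e^{-s_1 r/c}$ valid for $s_1>0$, and kill the flat part of the boundary using the odd-parity property of $X_n^m$ with $m+n$ odd (which is exactly the property behind \cite[Lemma 3.1]{LWZ12}); this gives the identity $\gamma_n=-\frac{s^2}{c^2}\int_D|u|^2dx-\int_D|\nabla u|^2dx$, from which the sign is immediate. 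Your computations (signs of normals, the limit $\rho\to\infty$, $\mathrm{Re}(s^{-1})=s_1/|s|^2$) all check out. What each approach buys: the paper's route is shorter but rests on external special-function lemmas; yours is self-contained, produces a sharper structural fact (an explicit energy representation of $\gamma_n$), and makes transparent why the parity constraint $m+n$ odd is essential, at the cost of having to justify integration by parts on an unbounded domain. The only points you should state explicitly in a final write-up are (i) that the termwise identity $\langle s^{-1}\mathscr{B}\omega,\omega\rangle_{\Gamma_R^+}=s^{-1}\sum\gamma_n|\beta_n^m|^2$ is legitimate for $\omega\in H^{1/2}(\Gamma_R^+)$ by the boundedness of $\mathscr{B}:H^{1/2}(\Gamma_R^+)\to H^{-1/2}(\Gamma_R^+)$ (the paper glosses over this too), and (ii) the decay of ${h_n^{(1)}}'(\frac{is}{c}r)$ as well as of $h_n^{(1)}$ itself, so that the outer hemisphere integral genuinely vanishes.
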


\begin{proof}
For $\omega=\sum_{|m|\leq n}^{odd}\omega_n^m X_n^m \in H^{1/2}(\Gamma_R^+)$,
it is deduced from (\ref{2.13}) that
$$-Re\langle s^{-1}\mathscr{B}\omega,\omega\rangle_{\Gamma_R^+}=-\sum_{|m|\leq n}^{odd}\frac{i}{c}\frac{{h_n^{(1)}}^{'}(\frac{is}{c}R)}{h_n^{(1)}(\frac{is}{c}R)}|\omega_n^m|^2.$$
By \cite[Lemma 2.3 and Lemma 2.4]{chen2008} with $s=s_1+i s_2$ ($s_1>0$) and $r_n^{(1)}(z)=z{h_n^{(1)}}^{'}(z)/h_n^{(1)}(z)$, we can easily get
\begin{eqnarray*}
Re\left\{\bar{s}[1+r_n^{(1)}(\frac{isR}{c})]\right\}\leq 0,\quad or \quad
Re\left\{s^{-1}[1+r_n^{(1)}(\frac{isR}{c})]\right\}\leq 0,
\end{eqnarray*}
which implies that
$$Re\left\{\frac{iR}{c}\frac{{h_n^{(1)}}^{'}(\frac{is}{c}R)}{h_n^{(1)}(\frac{is}{c}R)}\right\}\leq -Res^{-1}=-\frac{s_1}{|s|^2}\leq 0.$$
This completes the proof.
\end{proof}

\section{The reduced problem}\label{sec3}
\setcounter{equation}{0}

In this section, we shall obtain the main result of this paper including the well-posedness and stability of the reduced problem (\ref{2.10}) and an a priori estimate of the solution.
\subsection{Well-posedness in s-domain}
Consider the reduced boundary value problem
\begin{subnumcases}{} \label{3.1a}
\Delta^*\check{\bm u}-\rho_es^2\check{\bm u}=0 & ${\rm in}\;\; \Omega$ \\ \label{3.1b}
\Delta\check{\varphi}-\frac{s^{2}}{c^{2}}\check{\varphi}=0 & ${\rm in}\;\; \Omega_R$\\\label{3.1c}
\bm{\sigma(\check u)n}=-\rho_0s\check{\varphi}\n & ${\rm on}\;\; \Gamma$\\\label{3.1d}
s\check{\bm u}\cdot \n=-\frac{\partial \check{\varphi}}{\partial \n} & ${\rm on}\;\; \Gamma$\\ \label{3.1e}
\check{\varphi}=0 & ${\rm on}\;\;\Gamma_0$\\\label{3.1f}
\partial_{\n} \check{\varphi}=\mathscr{B}[\check{\varphi}]+\check{\rho}& ${\rm on}\;\; \Gamma_R^+$
\end{subnumcases}
in the product space $H:=\tilde{H_0^1}(\Omega_R)\times H^1(\Omega)^3$ with $\tilde{H_0^1}(\Omega_R):=\left\{u\in H^1(\Omega_R):u=0\;{\rm on}\;\Gamma_0\right\}$ which is clearly a closed subspace of $H^1(\Omega_R)$ and thereby a Hilbert space. We shall prove that Problem (\ref{3.1a})-(\ref{3.1f}) is well-posed in $H$ by the Lax-Milgram lemma. To this end, we first derive the variation formulation of (\ref{3.1a})-(\ref{3.1f}) by multiplying (\ref{3.1b}) and (\ref{3.1a})  the complex conjugates of a pair of test functions ($\check{\psi},\check{\bm{v}})\in H$, respectively, and applying Green's and Betti's formulas, transmission conditions (\ref{3.1c})-(\ref{3.1d}), and TBC (\ref{3.1f}). That is, we find
a solution $(\check{\varphi},\check{\bm{u}})\in H$ such that
\begin{equation}\label{3.2}
a\left((\check{\varphi},\check{\bm{u}}),(\check{\psi},\check{\bm{v}})\right)=\rho_0\int_{\Gamma_R^+}s^{-1}\check{\rho}\cdot\bar{\check{\psi}}d\gamma\quad {\rm for\;all}\;\; (\check{\psi},\check{\bm{v}})\in H,
\end{equation}
where the sesquilinear form $a(\cdot,\cdot)$ is defined as
\begin{equation}\label{3.3}
\begin{aligned}
a\left((\check{\varphi},\check{\bm{u}}),(\check{\psi},\check{\bm{v}})\right)&=\int_{\Omega_R}\rho_0(s^{-1}\nabla\check{\varphi}\cdot\nabla\bar{\check{\psi}}+\frac{s}{c^2}\check{\varphi}\cdot\bar{\check{\psi}})dx\\
&+\int_\Omega\left[s^{-1}\mu(\nabla\check{\bm{u}}:\nabla\bar{\check{\bm{v}}})+s^{-1}(\lambda+\mu)(\nabla\cdot\check{\bm{u}})(\nabla\cdot\bar{\check{\bm{v}}})+\rho_es\check{\bm{u}}\cdot\bar{\check{\bm{v}}}\right]dx\\
&-\rho_0\int_{\Gamma_R^+}s^{-1}\mathscr{B}[\check{\varphi}]\cdot\bar{\check{\psi}}d\gamma-\rho_0\int_\Gamma\check{\bm{u}}\cdot \n\bar{\check{\psi}}d\gamma+\rho_0\int_\Gamma\check{\varphi}\n\cdot\bar{\check{\bm{v}}}d\gamma,
\end{aligned}
\end{equation}
with $A:B=tr(AB^{T})$ the Frobenius inner product of square matrices $A$ and $B$.

Hereafter, we claim that the expressions $a\lesssim b$ or $a\gtrsim b$ means $a\leq Cb$ or $a\geq Cb$, respectively, where $C$ denotes a generic positive constant which does not depend on any function and important parameters in our model.
\begin{theorem}\label{thm3.1}
The variational problem (\ref{3.2}) has a unique solution $(\check{\varphi},\check{\bm{u}})\in H$ satisfying
\begin{align}
&\Vert\nabla\check{\varphi}\Vert_{L^2(\Omega_R)^3}+\Vert s\check{\varphi}\Vert_{L^2(\Omega_R)}\lesssim \frac{(1+|s|)^2}{s_1}\Vert\check{\tilde{\varphi}}^{inc}\Vert_{H^{1/2}(\Gamma_R^+)},\label{3.4}\\
&\Vert\nabla\check{\bm{u}}\Vert_{F(\Omega)}+\Vert \nabla\cdot\check{\bm{u}}\Vert_{L^2(\Omega)}+\Vert s\check{\bm{u}}\Vert_{{L^2(\Omega)}^3}\lesssim \frac{(1+|s|)^2}{s_1}\Vert\check{\tilde{\varphi}}^{inc}\Vert_{H^{1/2}(\Gamma_R^+)}.\label{3.5}
\end{align}
where $s\in\C_+$,  $\tilde{\varphi}^{inc}:=(\varphi^{inc},\varphi^{ref})$ with $H^{1/2}$-norm $\Vert\check{\tilde{\varphi}}^{inc}\Vert_{H^{1/2}(\Gamma_R^+)}$ defined in (\ref{incident}) and $\Vert\nabla\check{\bm{u}}\Vert_{F(\Omega)}$ is the Frobenius norm defined by
\begin{eqnarray*}
    \Vert\nabla\check{\bm{u}}\Vert_{F(\Omega)}:=\Big(\sum_{j=1}^3\int_{\Omega}|\nabla\check{u}_j|^2dx\Big)^{1/2}.
\end{eqnarray*}
\end{theorem}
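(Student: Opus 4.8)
The plan is to prove unique solvability of the variational problem (\ref{3.2}) via the Lax--Milgram lemma applied to the sesquilinear form $a(\cdot,\cdot)$ of (\ref{3.3}) on $H=\tilde{H_0^1}(\Omega_R)\times H^1(\Omega)^3$, and then to read off the stability bounds (\ref{3.4})--(\ref{3.5}) by testing with the solution itself and carefully tracking the powers of $s$. Continuity of $a$ is routine: each volume integral is controlled by Cauchy--Schwarz, the two interface integrals on $\Gamma$ by the trace theorem, and the DtN integral on $\Gamma_R^+$ by the boundedness of $\mathscr{B}:H^{1/2}(\Gamma_R^+)\to H^{-1/2}(\Gamma_R^+)$ noted after (\ref{2.13}); this yields $|a((\check\varphi,\check{\bm u}),(\check\psi,\check{\bm v}))|\lesssim C(s)\Vert(\check\varphi,\check{\bm u})\Vert_H\,\Vert(\check\psi,\check{\bm v})\Vert_H$ with a constant depending on $s$.

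The heart of the argument is coercivity, which I would obtain by evaluating $a$ on the diagonal, i.e. setting $(\check\psi,\check{\bm v})=(\check\varphi,\check{\bm u})$ and taking the real part. The crucial observation is that the two interface terms $-\rho_0\int_\Gamma(\check{\bm u}\cdot\n)\bar{\check\varphi}\,d\gamma+\rho_0\int_\Gamma\check\varphi\,(\n\cdot\bar{\check{\bm u}})\,d\gamma$ form a pair in which one summand is the negative complex conjugate of the other, so their sum is purely imaginary and drops out under $\mathrm{Re}$. The DtN contribution $-\rho_0\,\mathrm{Re}\langle s^{-1}\mathscr{B}\check\varphi,\check\varphi\rangle_{\Gamma_R^+}$ is nonnegative by Lemma \ref{lem2.1}. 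Since $\mathrm{Re}(s^{-1})=s_1/|s|^2>0$ and $\mathrm{Re}(s)=s_1>0$, and since $\mu\ge0$, $\lambda+\mu\ge0$, every remaining volume term is nonnegative, giving
\begin{align*}
\mathrm{Re}\,a((\check\varphi,\check{\bm u}),(\check\varphi,\check{\bm u}))\gtrsim \frac{s_1}{|s|^2}\big(\Vert\na\check\varphi\Vert_{L^2(\Omega_R)}^2+\Vert\na\check{\bm u}\Vert_{F(\Omega)}^2+\Vert\dive\check{\bm u}\Vert_{L^2(\Omega)}^2\big)+s_1\big(\Vert\check\varphi\Vert_{L^2(\Omega_R)}^2+\Vert\check{\bm u}\Vert_{L^2(\Omega)^3}^2\big).
\end{align*}
For fixed $s\in\C_+$ (and for $\mu>0$, so that $\na\check{\bm u}$ is controlled) this dominates a positive multiple of $\Vert(\check\varphi,\check{\bm u})\Vert_H^2$, so Lax--Milgram yields a unique solution in $H$.

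For the stability estimates I would again test with $(\check\varphi,\check{\bm u})$ and combine the coercivity bound above with the right-hand side of (\ref{3.2}), estimating $|\rho_0\int_{\Gamma_R^+}s^{-1}\check\rho\,\bar{\check\varphi}\,d\gamma|\lesssim|s|^{-1}\Vert\check\rho\Vert_{H^{-1/2}(\Gamma_R^+)}\Vert\check\varphi\Vert_{H^{1/2}(\Gamma_R^+)}$ and bounding the trace $\Vert\check\varphi\Vert_{H^{1/2}(\Gamma_R^+)}\lesssim\Vert\check\varphi\Vert_{H^1(\Omega_R)}$. The weighted volume bounds let me control $\Vert\check\varphi\Vert_{H^1(\Omega_R)}$ by the right-hand side, and a Young-type absorption then closes the loop and produces $\Vert\na\check\varphi\Vert+\Vert s\check\varphi\Vert$ together with the elastic quantities carrying the advertised weights. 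Finally, since $\check\rho=\partial_{\n}\check\varphi^0-\mathscr{B}\check\varphi^0$ with $\varphi^0=\varphi^{inc}+\varphi^{ref}$, bounding the normal trace and the $s$-dependent norm of $\mathscr{B}$ in terms of $\check{\tilde{\varphi}}^{inc}$ converts $\Vert\check\rho\Vert_{H^{-1/2}(\Gamma_R^+)}$ into $\Vert\check{\tilde{\varphi}}^{inc}\Vert_{H^{1/2}(\Gamma_R^+)}$, at the cost of factors of $(1+|s|)$.

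The main obstacle I anticipate is not the existence argument but the sharp bookkeeping of the $s$-dependence in this last step: the coercivity constant degenerates like $s_1/|s|^2$, while the right-hand side and the $s$-dependent norms of $\mathscr{B}$ and of $\check\rho$ each contribute further powers of $|s|$, and these must be balanced precisely through the absorption argument to land on the stated factor $(1+|s|)^2/s_1$ rather than a cruder power. The conjugate cancellation of the interface terms and the nonnegativity supplied by Lemma \ref{lem2.1} are exactly what make the real part tractable in the first place.
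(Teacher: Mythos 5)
Your proposal is correct and follows essentially the same route as the paper: continuity plus coercivity of $a(\cdot,\cdot)$ (with the purely imaginary cancellation of the $\Gamma$-interface terms and the sign of the DtN term from Lemma \ref{lem2.1}) feeding into Lax--Milgram, and then the stability bounds by testing with the solution itself, invoking the trace theorem, the bound $\Vert\partial_{\n}\check{\varphi}^0-\mathscr{B}\check{\varphi}^0\Vert_{H^{-1/2}(\Gamma_R^+)}\lesssim(1+|s|)\Vert\check{\tilde{\varphi}}^{inc}\Vert_{H^{1/2}(\Gamma_R^+)}$, and an absorption argument that yields exactly the factor $(1+|s|)^2/s_1$. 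The $s$-power bookkeeping you flag as the remaining work is carried out in the paper precisely as you outline it, in (\ref{3.9})--(\ref{3.10}).
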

\begin{proof}
i) $a(\cdot,\cdot)$ is continuous. Indeed, by Cauchy-Schwartz inequality, the boundedness of $\mathscr{B}$ and the trace theorem, it follows that
\begin{equation}\label{3.6}
\begin{aligned}
|a\left((\check{\varphi},\check{\bm{u}}),(\check{\psi},\check{\bm{v}})\right)|
&\lesssim|s|^{-1}\Vert\nabla\check{\varphi}\Vert_{L^2(\Omega_R)^3}\cdot\Vert\nabla\check{\psi}\Vert_{L^2(\Omega_R)^3}+c^{-2}|s|\Vert\check{\varphi}\Vert_{L^2(\Omega_R)}\cdot\Vert\check{\psi}\Vert_{L^2(\Omega_R)}\\
&\quad+|s|^{-1}\Vert\mathscr{B}[\check{\varphi}]\Vert_{H^{-1/2}(\Gamma_R^+)}\cdot\Vert\check{\psi}\Vert_{H^{1/2}(\Gamma_R^+)}
+\Vert\check{\bm{u}}\cdot n\Vert_{L^2(\Gamma)}\Vert\check{\psi}\Vert_{L^2(\Gamma)}\\
&\quad+|s|^{-1}\Vert\nabla\check{\bm{u}}\Vert_{F(\Omega)}\Vert\nabla\check{\bm{v}}\Vert_{F(\Omega)}+|s|^{-1}\Vert\nabla\cdot\check{\bm{u}}\Vert_{L^2(\Omega)}\Vert \nabla\cdot\check{\bm{v}}\Vert_{L^2(\Omega)}\\
&\quad+|s|\Vert\check{\bm{u}}\Vert_{{L^2(\Omega)}^3}\Vert\check{\bm{v}}\Vert_{{L^2(\Omega)}^3}+\Vert\check{\varphi}\Vert_{{L^2(\Gamma)}}\Vert\check{\bm{v}}\cdot n\Vert_{L^2(\Gamma)}\\
&\lesssim\Vert\check{\varphi}\Vert_{H^1(\Omega_R)}\cdot\Vert\check{\psi}\Vert_{H^1(\Omega_R)}+\Vert\check{\varphi}\Vert_{H^1(\Omega_R)}\cdot\Vert\check{\psi}\Vert_{H^1(\Omega_R)}\\
&\quad+\Vert\check{\varphi}\Vert_{H^{1/2}(\Gamma_R^+)}\cdot\Vert\check{\psi}\Vert_{H^{1/2}(\Gamma_R^+)}+\Vert\check{\bm{u}}\Vert_{H^{1/2}(\Gamma)^3}\Vert\check{\psi}\Vert_{H^{1/2}(\Gamma)}\\
&\quad+\Vert\check{\bm{u}}\Vert_{H^1(\Omega)^3}\Vert\check{\bm{v}}\Vert_{H^1(\Omega)^3}+\Vert\check{\bm{u}}\Vert_{H^1(\Omega)^3}\Vert\check{\bm{v}}\Vert_{H^1(\Omega)^3}+\Vert\check{\varphi}\Vert_{H^{1/2}(\Gamma)}\Vert\check{\bm{v}}\Vert_{H^{1/2}(\Gamma)^3}\\
&\lesssim\Vert\check{\varphi}\Vert_{H^1(\Omega_R)}\cdot\Vert\check{\psi}\Vert_{H^1(\Omega_R)}+\Vert\check{\bm{u}}\Vert_{H^1(\Omega)^3}\Vert\check{\psi}\Vert_{H^1(\Omega_R)}\\
&\quad +\Vert\check{\bm{u}}\Vert_{H^1(\Omega)^3}\Vert\check{\bm{v}}\Vert_{H^1(\Omega)^3}+
\Vert\check{\varphi}\Vert_{H^1(\Omega_R)}\Vert\check{\bm{v}}\Vert_{H^1(\Omega)^3}
\end{aligned}
\end{equation}
which yields that $a(\cdot,\cdot)$ is continuous in the product space $H\times H$.\par
ii) $a(\cdot,\cdot)$ is strictly coercive. For $(\check{\psi},\check{\bm{v}}): = (\check{\varphi},\check{\bm{u}})$,
(\ref{3.3}) becomes
\begin{equation}\label{3.7}
\begin{aligned}
a\left((\check{\varphi},\check{\bm{u}}),(\check{\varphi},\check{\bm{u}})\right)&=\int_{\Omega_R}\rho_0(s^{-1}|\nabla\check{\varphi}|^2+\frac{s}{c^2}|\check{\varphi}|^2)dx\\
&+\int_\Omega\left[s^{-1}\mu(\nabla\check{\bm{u}}:\nabla\bar{\check{\bm{u}}})+s^{-1}(\lambda+\mu)(\nabla\cdot\check{\bm{u}})(\nabla\cdot\bar{\check{\bm{u}}})+\rho_es|\check{\bm{u}}|^2\right]dx\\
&-\rho_0\int_{\Gamma_R^+}s^{-1}\mathscr{B}[\check{\varphi}]\cdot\bar{\check{\varphi}}d\gamma+\rho_0\int_\Gamma(\check{\varphi}\n\cdot\bar{\check{\bm{u}}}-\check{\bm{u}}\cdot \n\bar{\check{\varphi}})d\gamma.
\end{aligned}
\end{equation}
Taking the real part of (3.7) and using Lemma \ref{lem2.1}, we get
\begin{equation}\label{3.8}
\begin{aligned}
Re\{a\left((\check{\varphi},\check{\bm{u}}),(\check{\varphi},\check{\bm{u}})\right)\}&\gtrsim\frac{s_1}{|s|^2}\left(\Vert\nabla\check{\varphi}\Vert_{L^2(\Omega_R)^3}^2
+\Vert s\check{\varphi}\Vert_{L^2(\Omega_R)}^2\right)\\
&+\frac{s_1}{|s|^2}\left(\Vert\nabla\check{\bm{u}}\Vert_{F(\Omega)}^2+\Vert\nabla\cdot\check{\bm{u}}\Vert_{{L^2(\Omega)}}^2+\Vert s\check{\bm{u}}\Vert_{{L^2(\Omega)}^3}^2\right).
\end{aligned}
\end{equation}
It follows from the Lax-Milgram lemma that the variational problem (\ref{3.2}) has a unique solution $(\check{\varphi},\check{\bm{u}})\in H$ for each $s\in\C_+$. 

Moreover, recalling $\varphi^0 = \varphi^{inc} + \varphi^{ref}$ which leads to 
$\partial_{\bm{ n}}\varphi^{0} = -\big(\frac{d}{c}\partial_t\varphi^{inc}+\frac{d'}{c}\partial_t\varphi^{ref}\big)\cdot x/|x|$ from the definitions on $\varphi^{inc}$ and $\varphi^{ref}$, we then have
\begin{align}\label{incident}
	\Vert\partial_{\bm{n}}\check{\varphi}^0\Vert_{H^{-1/2}(\Gamma_R^+)}\lesssim |s|(\Vert \check{\varphi}^{inc}\Vert_{H^{1/2}(\Gamma_R^+)}+\Vert \check{\varphi}^{ref}\Vert_{H^{1/2}(\Gamma_R^+)}):=|s|\cdot\Vert\check{\tilde{\varphi}}^{inc}\Vert_{H^{1/2}(\Gamma_R^+)}
\end{align}
 by using the Laplace transform. This, together with the Cauchy-Schwartz inequality, the trace theorem and the boundedness of $\mathscr{B}$ as well as the definition of  $\check{\rho}$, yields that 
\begin{equation}\label{3.9}
\begin{aligned}
|a\left((\check{\varphi},\check{\bm{u}}),(\check{\varphi},\check{\bm{u}})\right)|&\lesssim|\rho_0\int_{\Gamma_R^+}s^{-1}\check{\rho}\cdot\bar{\check{\varphi}}d\gamma|\\
&\lesssim\frac{1}{|s|^2}\Vert\check{\rho}\Vert_{H^{-1/2}(\Gamma_R^+)}\Vert s\check{\varphi}\Vert_{H^{1/2}(\Gamma_R^+)}\\
&\lesssim\frac{1}{|s|^2}\Vert\partial_{\n}\check{\varphi}^0-\mathscr{B}[\check{\varphi}^0]\Vert_{H^{-1/2}(\Gamma_R^+)}\Vert s\check{\varphi}\Vert_{H^1(\Omega_R)}\\
&\lesssim\frac{1+|s|}{|s|^2}\Vert\check{\tilde{\varphi}}^{inc}\Vert_{H^{1/2}(\Gamma_R^+)}\left((1+|s|^2)(\Vert\nabla\check{\varphi}\Vert_{L^2(\Omega_R)^3}^2
+\Vert s\check{\varphi}\Vert_{L^2(\Omega_R)}^2)\right)^{\frac{1}{2}}\\
&\lesssim\frac{(1+|s|)^2}{|s|^2}\Vert\check{\tilde{\varphi}}^{inc}\Vert_{H^{1/2}(\Gamma_R^+)}\left(\Vert\nabla\check{\varphi}\Vert_{L^2(\Omega_R)^3}^2
+\Vert s\check{\varphi}\Vert_{L^2(\Omega_R)}^2\right)^{\frac{1}{2}}.
\end{aligned}
\end{equation}
Combining (\ref{3.8})-(\ref{3.9}) with the Cauchy-Schwartz inequality again, we get
\begin{align*}
	\Vert\nabla\check{\varphi}\Vert_{L^2(\Omega_R)^3}+\Vert s\check{\varphi}\Vert_{L^2(\Omega_R)}
	\lesssim\left(\Vert\nabla\check{\varphi}\Vert_{L^2(\Omega_R)^3}^2+\Vert s\check{\varphi}\Vert_{L^2(\Omega_R)}^2\right)^{\frac{1}{2}}
	\lesssim\frac{(1+|s|)^2}{s_1}\Vert\check{\tilde{\varphi}}^{inc}\Vert_{H^{1/2}(\Gamma_R^+)}.
\end{align*}
Similar discussings applied to $\check{\bm{u}}$ yields
\begin{equation}\label{3.10}
\begin{aligned}
&\quad\Vert\nabla\check{\bm{u}}\Vert_{F(\Omega)}^2+\Vert\nabla\cdot\check{\bm{u}}\Vert_{{L^2(\Omega)}}^2+\Vert s\check{\bm{u}}\Vert_{{L^2(\Omega)}^3}^2\\
&\lesssim\Vert\nabla\check{\bm{u}}\Vert_{F(\Omega)}^2+\Vert\nabla\cdot\check{\bm{u}}\Vert_{{L^2(\Omega)}}^2+\Vert s\check{\bm{u}}\Vert_{{L^2(\Omega)}^3}^2+\Vert\nabla\check{\varphi}\Vert_{L^2(\Omega_R)^3}^2
+\Vert s\check{\varphi}\Vert_{L^2(\Omega_R)}^2\\
&\lesssim\frac{|s|^2}{s_1}|a\left((\check{\varphi},\check{\bm{u}}),(\check{\varphi},\check{\bm{u}})\right)|\\
&\lesssim\frac{|s|^2}{s_1}\cdot\frac{(1+|s|)^2}{|s|^2}\Vert\check{\tilde{\varphi}}^{inc}\Vert_{H^{1/2}(\Gamma_R^+)}\left(\Vert\nabla\check{\varphi}\Vert_{L^2(\Omega_R)^3}^2
+\Vert s\check{\varphi}\Vert_{L^2(\Omega_R)}^2\right)^{\frac{1}{2}}\\
&\lesssim\frac{(1+|s|)^2}{s_1}\Vert\check{\tilde{\varphi}}^{inc}\Vert_{H^{1/2}(\Gamma_R^+)}\Big(\Vert\nabla\check{\bm{u}}\Vert_{F(\Omega)}^2+\Vert\nabla\cdot\check{\bm{u}}\Vert_{{L^2(\Omega)}}^2+\Vert s\check{\bm{u}}\Vert_{{L^2(\Omega)}^3}^2\\
&\quad+\Vert\nabla\check{\varphi}\Vert_{L^2(\Omega_R)^3}^2
+\Vert s\check{\varphi}\Vert_{L^2(\Omega_R)}^2\Big)^\frac{1}{2}.
\end{aligned}
\end{equation}
Applying Cauchy-Schwartz inequality again, we have\\
\begin{equation}
\begin{aligned}
&\quad\Vert\nabla\check{\bm{u}}\Vert_{F(\Omega)}+\Vert \nabla\cdot\check{\bm{u}}\Vert_{L^2(\Omega)}+\Vert s\check{\bm{u}}\Vert_{{L^2(\Omega)}^3}\\
&\lesssim\left(\Vert\nabla\check{\bm{u}}\Vert_{F(\Omega)}^2+\Vert\nabla\cdot\check{\bm{u}}\Vert_{{L^2(\Omega)}}^2+\Vert s\check{\bm{u}}\Vert_{{L^2(\Omega)}^3}^2+\Vert\nabla\check{\varphi}\Vert_{L^2(\Omega_R)^3}^2
+\Vert s\check{\varphi}\Vert_{L^2(\Omega_R)}^2\right)^\frac{1}{2}\\
&\lesssim\frac{(1+|s|)^2}{s_1}\Vert\check{\tilde{\varphi}}^{inc}\Vert_{H^{1/2}(\Gamma_R^+)}.\notag
\end{aligned}
\end{equation}
This ends our proof.
\end{proof}
\subsection{Well-posedness in time domain.}
Recalling that
\begin{equation}\label{3.11}
\varphi^{inc}(x,\cdot)=f(x\cdot d-c\cdot)\in C^k(k\geq3)\ \text{with respect to}\;t\  for\ any\ x\in\Omega^c,
\end{equation}
we shall under this assumption show the well-posedness of (\ref{reduced}).
\begin{lem}\label{lem3.2}
Given $t\geq 0$ and $\omega\in L^2\left(0,t;H^{1/2}(\Gamma_R^+)\right)$ with initial value $\omega(\cdot,0)=0$, it holds that
$$Re\int_0^t\int_{\Gamma_R^+}\mathscr{T}[\omega]\cdot\partial_\tau\bar{\omega}d\gamma d\tau\leq 0.$$
\end{lem}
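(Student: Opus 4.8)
The plan is to push the $s$-domain sign condition of Lemma~\ref{lem2.1} back to the time domain through the Plancherel identity for the Laplace transform, exploiting the fact that $\mathscr{T}=\mathscr{L}^{-1}\circ\mathscr{B}\circ\mathscr{L}$ is a causal temporal convolution. Since $\omega$ is only prescribed on $(0,t)$, I would first extend it to $(0,\infty)$ by the constant value $\omega(\cdot,t)$ for $\tau>t$, producing $\omega_t$; this extension is continuous in $\tau$ (so $\partial_\tau\omega_t$ has no singular part at $\tau=t$), keeps $\omega_t(\cdot,0)=0$, and satisfies $\partial_\tau\omega_t=0$ for $\tau>t$. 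By the causality of $\mathscr{T}$ one has $\mathscr{T}[\omega_t]=\mathscr{T}[\omega]$ on $(0,t)$, so that for every $s_1>0$ the tail $\tau>t$ contributes nothing and
\begin{equation*}
J(s_1):=\mathrm{Re}\int_0^\infty e^{-2s_1\tau}\langle\mathscr{T}[\omega_t],\partial_\tau\omega_t\rangle_{\Gamma_R^+}\,d\tau=\mathrm{Re}\int_0^t e^{-2s_1\tau}\langle\mathscr{T}[\omega],\partial_\tau\omega\rangle_{\Gamma_R^+}\,d\tau .
\end{equation*}

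Next, fixing $s=s_1+is_2$ on the vertical line $\mathrm{Re}\,s=s_1$, I would apply the Plancherel theorem for the Laplace transform (cf. Appendix~A) together with $\mathscr{L}[\partial_\tau\omega_t]=s\check\omega_t$ (valid because $\omega_t(\cdot,0)=0$ and $\omega_t$ is continuous, so no jump term appears) and $\mathscr{L}[\mathscr{T}\omega_t]=\mathscr{B}\check\omega_t$, to obtain
\begin{equation*}
J(s_1)=\frac{1}{2\pi}\int_{-\infty}^\infty\mathrm{Re}\big\{\bar s\,\langle\mathscr{B}\check\omega_t,\check\omega_t\rangle_{\Gamma_R^+}\big\}\,ds_2 .
\end{equation*}
The algebraic identity $\bar s=|s|^2 s^{-1}$ then gives $\mathrm{Re}\{\bar s\,\langle\mathscr{B}\check\omega_t,\check\omega_t\rangle_{\Gamma_R^+}\}=|s|^2\,\mathrm{Re}\langle s^{-1}\mathscr{B}\check\omega_t,\check\omega_t\rangle_{\Gamma_R^+}\le 0$ for each $s_2$ by Lemma~\ref{lem2.1}, so that $J(s_1)\le 0$ for all $s_1>0$.

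Finally I would let $s_1\to 0^+$. On the finite interval $(0,t)$ the weights $e^{-2s_1\tau}$ are bounded by $1$ and converge pointwise to $1$, so dominated convergence yields
\begin{equation*}
\mathrm{Re}\int_0^t\langle\mathscr{T}[\omega],\partial_\tau\omega\rangle_{\Gamma_R^+}\,d\tau=\lim_{s_1\to0^+}J(s_1)\le 0,
\end{equation*}
which is exactly the asserted inequality.

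The conceptual crux is that the sign condition for the symbol $\mathscr{B}$ is only available for $s_1>0$ (Lemma~\ref{lem2.1} genuinely fails on the imaginary axis, reflecting the radiation condition), which is precisely why a direct Fourier argument on $\mathrm{Re}\,s=0$ is unavailable and the detour through the weighted integral followed by the limit $s_1\to0^+$ is forced. On the technical side, the step I expect to require most care is justifying the causality identity $\mathscr{T}[\omega_t]=\mathscr{T}[\omega]$ on $(0,t)$ and the applicability of the operator-valued Plancherel identity, which requires $e^{-s_1\tau}\omega_t$ and $e^{-s_1\tau}\partial_\tau\omega_t$ to lie in $L^2\big(0,\infty;H^{1/2}(\Gamma_R^+)\big)$ and $L^2\big(0,\infty;H^{-1/2}(\Gamma_R^+)\big)$ respectively; this in turn rests on the tacit regularity $\partial_\tau\omega\in L^2\big(0,t;H^{1/2}(\Gamma_R^+)\big)$, which one may first assume for smooth $\omega$ and then recover the general case by density.
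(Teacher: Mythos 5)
Your proposal is correct and follows the same overall strategy as the paper's proof: insert the weight $e^{-2s_1\tau}$, pass to the $s$-domain via the Parseval identity (\ref{A.5}) so that $\mathscr{T}$ becomes $\mathscr{B}$ and $\partial_\tau$ becomes multiplication by $s$, apply the sign condition of Lemma \ref{lem2.1} pointwise in $s_2$, and let $s_1\to 0^+$. The one substantive difference is the extension: the paper extends $\omega$ by zero outside $[0,t]$, whereas you extend by the constant value $\omega(\cdot,t)$, and your choice is in fact the more careful one. The zero extension $\tilde{\omega}$ has a jump at $\tau=t$ whenever $\omega(\cdot,t)\neq 0$, so the identity $\mathscr{L}(\partial_\tau\tilde{\omega})=s\check{\tilde{\omega}}$ used in the Parseval step holds only for the distributional derivative, which contains a Dirac mass at $\tau=t$; strictly speaking, the paper's first equality then differs from its Parseval line by an uncontrolled boundary term of the form $-e^{-2s_1t}\,\mathrm{Re}\langle\mathscr{T}[\tilde{\omega}](t),\omega(\cdot,t)\rangle_{\Gamma_R^+}$, which has no definite sign. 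With your continuous extension $\omega_t$ one has exactly $\mathscr{L}(\partial_\tau\omega_t)=s\check{\omega}_t$, and by causality $\mathscr{T}[\omega_t]=\mathscr{T}[\omega]$ on $(0,t)$ while $\partial_\tau\omega_t\equiv 0$ for $\tau>t$, so the weighted integral over $(0,\infty)$ coincides with the one over $(0,t)$ with no spurious term. Your closing remarks on the tacit regularity $\partial_\tau\omega\in L^2\big(0,t;H^{1/2}(\Gamma_R^+)\big)$ (recovered by density) and on justifying the limit $s_1\to 0^+$ by dominated convergence address points the paper leaves implicit, so your argument is a cleaner variant of the same route rather than a different one.
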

\begin{proof}
First, we extend $\omega$ by $0$ into $\tilde{\omega}$
with respect to $\tau$, that is $\tilde{\omega}=0$ outside the interval $[0,t].$ Using the Parseval identity (\ref{A.5}) and Lemma \ref{lem2.1}, we have
\begin{align}
&Re\int_0^te^{-2s_1\tau}\int_{\Gamma_R^+}\mathscr{T}[\omega]\cdot\partial_\tau\bar{\omega}d\gamma d\tau\notag\\
={}&Re\int_{\Gamma_R^+}\int_0^{\infty}e^{-2s_1\tau}\mathscr{T}[\tilde{\omega}]\cdot\partial_\tau\bar{\tilde{\omega}}d\tau d\gamma\notag\\
={}&\frac{1}{2\pi}\int_{-\infty}^{\infty}Re\langle\mathscr{B}[\check{\tilde{\omega}}],s\check{\tilde{\omega}}\rangle_{\Gamma_R^+}ds_2\notag\\
={}&\frac{1}{2\pi}\int_{-\infty}^{\infty}|s|^2Re\langle s^{-1}\mathscr{B}[\check{\tilde{\omega}}],\check{\tilde{\omega}}\rangle_{\Gamma_R^+}ds_2\notag\\
\leq{}&0.\notag
\end{align}
which completes the proof by taking $s_1\rightarrow 0.$
\end{proof}

\begin{lem}\label{lem3.3}
Given $t\geq 0$ and $\omega\in L^2\left(0,t;H^{1/2}(\Gamma_R^+)\right)$ with initial value $\omega(\cdot,0)=\partial_t\omega(\cdot,0)=0$, it holds that
$$Re\int_0^t\int_{\Gamma_R^+}\mathscr{T}[\partial_\tau\omega]\cdot\partial_\tau^2\bar{\omega}d\gamma d\tau\leq 0.$$
\end{lem}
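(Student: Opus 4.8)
The plan is to recognize that Lemma \ref{lem3.3} is nothing but Lemma \ref{lem3.2} applied to the time derivative $v:=\partial_\tau\omega$ in place of $\omega$. Under this substitution the integrand $\mathscr{T}[\partial_\tau\omega]\cdot\partial_\tau^2\bar\omega$ is exactly $\mathscr{T}[v]\cdot\partial_\tau\bar v$, so that
$$Re\int_0^t\int_{\Gamma_R^+}\mathscr{T}[\partial_\tau\omega]\cdot\partial_\tau^2\bar\omega\,d\gamma\,d\tau = Re\int_0^t\int_{\Gamma_R^+}\mathscr{T}[v]\cdot\partial_\tau\bar v\,d\gamma\,d\tau.$$
The hypothesis $\partial_t\omega(\cdot,0)=0$ is precisely the initial condition $v(\cdot,0)=0$ demanded by Lemma \ref{lem3.2}, whence the right-hand side is $\leq 0$ and the claim follows immediately. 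The only thing to verify is that $v$ still lies in $L^2\left(0,t;H^{1/2}(\Gamma_R^+)\right)$, i.e.\ that $\omega$ carries one more degree of time regularity than the bare statement grants; this is harmless in the intended application, where $\omega=\varphi$ inherits its smoothness in $t$ from the incident data $f\in C^k$, $k\geq 3$.

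If one prefers to argue directly rather than quote Lemma \ref{lem3.2}, I would repeat its proof verbatim. First extend $\omega$ by zero to $\tilde\omega$ on $(0,\infty)$; since $\omega(\cdot,0)=\partial_t\omega(\cdot,0)=0$, the Laplace transforms satisfy $\mathscr{L}(\partial_\tau\tilde\omega)=s\check{\tilde\omega}$ and $\mathscr{L}(\partial_\tau^2\tilde\omega)=s^2\check{\tilde\omega}$. Inserting the weight $e^{-2s_1\tau}$, invoking the Parseval identity (A.5), and using $\mathscr{T}=\mathscr{L}^{-1}\circ\mathscr{B}\circ\mathscr{L}$ turn the weighted integral into
$$\frac{1}{2\pi}\int_{-\infty}^\infty Re\langle s\mathscr{B}[\check{\tilde\omega}],\,s^2\check{\tilde\omega}\rangle_{\Gamma_R^+}\,ds_2.$$
Because the inner product is conjugate-linear in the second slot, the factor produced is $s\bar s^2=|s|^2\bar s$, and with $\bar s=|s|^2/s$ the integrand equals $|s|^4\,Re\langle s^{-1}\mathscr{B}[\check{\tilde\omega}],\check{\tilde\omega}\rangle_{\Gamma_R^+}$, which is nonpositive by Lemma \ref{lem2.1}. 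Letting $s_1\to 0$ then yields the assertion.

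The main—indeed essentially the only—obstacle is bookkeeping rather than analysis: one must make sure the extra time derivative introduces no boundary term at $\tau=0$ when forming the transform, so that $\mathscr{L}(\partial_\tau^2\tilde\omega)=s^2\check{\tilde\omega}$ holds cleanly. This is exactly what the second vanishing initial datum $\partial_t\omega(\cdot,0)=0$ secures, and it explains why Lemma \ref{lem3.3} asks for one more trivial initial condition than Lemma \ref{lem3.2}. Everything else—the sign supplied by Lemma \ref{lem2.1} and the positivity of the scalar factor $|s|^4$—propagates automatically and preserves the inequality.
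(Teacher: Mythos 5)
Your proposal is correct and coincides with the paper's own proof, which simply states that the argument of Lemma \ref{lem3.2} carries over with $\omega$ replaced by $\partial_\tau\omega$ and omits the details. Your explicit Laplace-transform computation (producing the factor $s\bar{s}^2=|s|^4 s^{-1}$ and invoking Lemma \ref{lem2.1}) correctly fills in exactly what the paper leaves implicit, including the role of the extra initial condition $\partial_t\omega(\cdot,0)=0$.
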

\begin{proof}
The proof is similar to that in Lemma \ref{lem3.2} with $\omega$ replaced by $\partial_\tau\omega$.
So we here omit its detailed proof.
\end{proof}

\begin{theorem}
The reduced initial-boundary value problem (\ref{reduced}) has a unique solution $\left(\varphi(x,t),\bm{u}(x,t)\right)$
satisfying
\begin{align*}
&\varphi(x,t)\in L^2\big(0,T;\tilde{H^1_0}(\Omega_R)\big)\cap H^1\left(0,T;L^2(\Omega_R)\right),\\
&\bm{u}(x,t)\in L^2\left(0,T;H^1(\Omega)^3\right)\cap H^1\left(0,T;L^2(\Omega)^3\right),
\end{align*}
with the stability estimate
\begin{align}
&\max\limits_{t\in[0,T]}\left[(\Vert\partial_t\varphi\Vert_{L^2(\Omega_R)}+\Vert\nabla\partial_t\varphi\Vert_{L^2(\Omega_R)^3})+(\Vert\partial_t\bm{u}\Vert_{L^2(\Omega)^3}+\Vert\nabla\cdot\bm{u}\Vert_{L^2(\Omega)}+\Vert\nabla\bm{u}\Vert_{F(\Omega)})\right]\notag\\
\lesssim{}&\Vert\rho\Vert_{L^1(0,T;H^{-1/2}(\Gamma_R^+))}+\max\limits_{t\in[0,T]}\Vert\partial_t\rho\Vert_{H^{-1/2}(\Gamma_R^+)}+\Vert\partial_t^2\rho\Vert_{L^1(0,T;H^{-1/2}(\Gamma_R^+))}\notag.
\end{align}
\end{theorem}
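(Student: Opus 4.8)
The plan is to treat existence/regularity and the stability bound by two different tools, exactly as the abstract advertises. Existence, uniqueness and the claimed space--time regularity I would obtain by inverting in time the $s$-domain solution furnished by Theorem \ref{thm3.1}; the quantitative bound I would obtain by an energy argument run directly in the time domain, whose only non-elementary input is the dissipativity of the TBC encoded in Lemmas \ref{lem3.2}--\ref{lem3.3}. For the first part, note that for every $s\in\C_+$ Theorem \ref{thm3.1} gives a unique $(\check\varphi,\check{\bm u})\in H$ depending analytically on $s$, and \eqref{3.4}--\eqref{3.5} bound $\|\check\varphi\|$, $\|s\check\varphi\|$, $\|\check{\bm u}\|$, $\|s\check{\bm u}\|$ by a fixed polynomial in $|s|$ and $s_1^{-1}$. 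I would then invoke the abstract inversion theorem of the Laplace transform (Appendix A, as in \cite{LWW15,GLZ17}): reading $\check\varphi$ as valued in $\tilde{H^1_0}(\Omega_R)$ gives $\varphi\in L^2(0,T;\tilde{H^1_0}(\Omega_R))$, while $s\check\varphi\leftrightarrow\partial_t\varphi$ valued in $L^2(\Omega_R)$ gives $\varphi\in H^1(0,T;L^2(\Omega_R))$, and likewise for $\bm u$; uniqueness descends from $s$-domain uniqueness and injectivity of $\mathscr{L}$.

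\textbf{The basic energy identity.} For the stability bound I would multiply the acoustic equation of \eqref{reduced} by $\rho_0\partial_t\bar\varphi$ and integrate over $\Omega_R$, and multiply the elastodynamic equation by $\partial_t\bar{\bm u}$ and integrate over $\Omega$, then apply Green's and Betti's formulas. The integral over $\Gamma_0$ drops because $\varphi=0$ there; the two interface integrals over $\Gamma$ cancel once the kinematic condition \eqref{2.5} and dynamic condition \eqref{2.6} are inserted (using $\mathrm{Re}(a\bar b)=\mathrm{Re}(b\bar a)$ and the fact that the fluid sees the opposite orientation of $\n$ on $\Gamma$). Taking real parts and integrating on $[0,t]$ with the homogeneous initial data, which makes the initial energy vanish, I expect the identity
$$\mathcal E_0(t)=2\rho_0\,\mathrm{Re}\!\int_0^t\!\!\int_{\Gamma_R^+}\!\mathscr{T}[\varphi]\,\partial_\tau\bar\varphi\,d\gamma\,d\tau+2\rho_0\,\mathrm{Re}\!\int_0^t\!\!\int_{\Gamma_R^+}\!\rho\,\partial_\tau\bar\varphi\,d\gamma\,d\tau,$$
where $\mathcal E_0(t)=\tfrac{\rho_0}{c^2}\|\partial_t\varphi\|_{L^2(\Omega_R)}^2+\rho_0\|\nabla\varphi\|_{L^2(\Omega_R)}^2+\rho_e\|\partial_t\bm u\|_{L^2(\Omega)}^2+\mu\|\nabla\bm u\|_{F(\Omega)}^2+(\lambda+\mu)\|\nabla\cdot\bm u\|_{L^2(\Omega)}^2$. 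Lemma \ref{lem3.2} kills the first term ($\le 0$), and the source term I would bound by $\|\rho\|_{L^1(0,t;H^{-1/2}(\Gamma_R^+))}$ times the trace of $\partial_\tau\varphi$; the trace theorem together with Poincar\'e's inequality (legitimate since $\varphi=0$ on $\Gamma_0$) turns that trace into $\|\nabla\partial_\tau\varphi\|_{L^2(\Omega_R)}$, a quantity not controlled by $\mathcal E_0$ itself.

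\textbf{The differentiated system.} To supply $\|\nabla\partial_t\varphi\|$ (which is what the left-hand side asks for in place of $\|\nabla\varphi\|$), I would differentiate the whole system once in $t$; the coefficients are constant and $\mathscr{T}=\mathscr{L}^{-1}\circ\mathscr{B}\circ\mathscr{L}$ commutes with $\partial_t$, so $(\partial_t\varphi,\partial_t\bm u)$ solves the same problem with source $\partial_t\rho$ on $\Gamma_R^+$ and homogeneous data. Repeating the computation with test functions $\partial_t^2\bar\varphi,\ \partial_t^2\bar{\bm u}$ produces the higher-order identity for $\mathcal E_1(t)$, the same quadratic form with one extra time derivative on each factor; its TBC term is now nonpositive by Lemma \ref{lem3.3}. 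In the $\mathcal E_1$ source term I would integrate by parts in time (using $\partial_t\varphi(\cdot,0)=0$) to trade $\partial_t\rho$ tested against $\partial_\tau^2\bar\varphi$ for a boundary contribution bounded by $\max_{[0,t]}\|\partial_t\rho\|_{H^{-1/2}}$ and an integral bounded by $\|\partial_t^2\rho\|_{L^1(0,t;H^{-1/2})}$, both paired with traces absorbed into $\mathcal E_1^{1/2}$.

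\textbf{Closing and the main obstacle.} The two identities are coupled only in one direction: $\mathcal E_1$ closes by itself, giving $\max_{[0,t]}\mathcal E_1^{1/2}\lesssim\max_{[0,t]}\|\partial_t\rho\|_{H^{-1/2}}+\|\partial_t^2\rho\|_{L^1(H^{-1/2})}$, whereas the $\mathcal E_0$ source term needs the $\|\nabla\partial_\tau\varphi\|$ that $\mathcal E_1$ controls. I would therefore substitute the $\mathcal E_1$ bound into the $\mathcal E_0$ inequality and absorb the remaining quadratic self-term by Young's inequality, obtaining $\max_{[0,t]}\mathcal E_0^{1/2}\lesssim\|\rho\|_{L^1(H^{-1/2})}+\max\|\partial_t\rho\|_{H^{-1/2}}+\|\partial_t^2\rho\|_{L^1(H^{-1/2})}$. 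Reading $\|\partial_t\varphi\|$, $\|\partial_t\bm u\|$, $\|\nabla\bm u\|_{F(\Omega)}$ and $\|\nabla\cdot\bm u\|$ off $\mathcal E_0$ (the coercivity of the Lam\'e part needs $\mu>0$) and $\|\nabla\partial_t\varphi\|$ off $\mathcal E_1$, and taking the maximum over $[0,T]$, yields precisely the asserted estimate. The hard part is the $\Gamma_R^+$ boundary term: its dissipativity is exactly Lemmas \ref{lem3.2}--\ref{lem3.3}, and closing the estimate without losing a derivative is what forces both the coupling of the zeroth- and first-order energies and the appearance of $\partial_t^2\rho$ in the data; getting the $\Gamma$-interface cancellation with the correct orientations and matching the mixed-order left-hand side ($\|\nabla\partial_t\varphi\|$ next to the undifferentiated $\|\nabla\bm u\|_{F(\Omega)}$) is the delicate bookkeeping.
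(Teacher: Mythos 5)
Your proposal is correct and follows essentially the same route as the paper: existence, uniqueness and regularity via Laplace inversion of the $s$-domain solution from Theorem \ref{thm3.1}, and the stability bound via the two energies $\varepsilon_1,\varepsilon_2$ (your $\mathcal E_0,\mathcal E_1$) for the original and once-differentiated systems, with Lemmas \ref{lem3.2}--\ref{lem3.3} handling the TBC terms and a time integration by parts producing the $\max\Vert\partial_t\rho\Vert$ and $\Vert\partial_t^2\rho\Vert_{L^1}$ data terms. The only (cosmetic) difference is bookkeeping: you close the $\mathcal E_1$ estimate first using Poincar\'e's inequality and then substitute it into the $\mathcal E_0$ inequality, whereas the paper adds the two inequalities and absorbs the common $\epsilon$-term into the combined energy $\varepsilon_1+\varepsilon_2$.
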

\begin{proof}
Simple calculations yields
\begin{align}
&\int_0^T(\Vert\nabla\varphi\Vert_{L^2(\Omega_R)^3}^2+\Vert\partial_t\varphi\Vert_{L^2(\Omega_R)}^2+\Vert\nabla\bm{u}\Vert_{F(\Omega)}^2+\Vert\partial_t\bm{u}\Vert_{L^2(\Omega)^3}^2)dt\notag\\
\leq{}&\int_0^Te^{-2s_1(t-T)}(  \Vert\nabla\varphi\Vert_{L^2(\Omega_R)^3}^2+\Vert\partial_t\varphi\Vert_{L^2(\Omega_R)}^2+\Vert\nabla\bm{u}\Vert_{F(\Omega)}^2+\Vert\partial_t\bm{u}\Vert_{L^2(\Omega)^3}^2)dt\notag\\
\lesssim{}&\int_0^{\infty}e^{-2s_1t}(  \Vert\nabla\varphi\Vert_{L^2(\Omega_R)^3}^2+\Vert\partial_t\varphi\Vert_{L^2(\Omega_R)}^2+\Vert\nabla\bm{u}\Vert_{F(\Omega)}^2+\Vert\partial_t\bm{u}\Vert_{L^2(\Omega)^3}^2)dt\notag,
\end{align}
we estimate the integral for simplicity
$$\int_0^{\infty}e^{-2s_1t}(  \Vert\nabla\varphi\Vert_{L^2(\Omega_R)^3}^2+\Vert\partial_t\varphi\Vert_{L^2(\Omega_R)}^2+\Vert\nabla\bm{u}\Vert_{F(\Omega)}^2+\Vert\partial_t\bm{u}\Vert_{L^2(\Omega)^3}^2)dt.$$
Recalling the reduced system in s-domain (\ref{2.10}), by the estimate (\ref{3.4}) and (\ref{3.5}) in Theorem \ref{thm3.1}, 
 it follows from \cite[Lemma 44.1]{treves1975} that $(\check{\varphi},\check{\bm{u}})$ are holomorphic functions of $s$ on the half plane $s_1>\gamma>0,$ where $\gamma$ is any positive constant. Hence we have from Lemma A.2 that the inverse Laplace transform of $\check{\varphi}$ and $\check{\bm{u}}$ exist and are supported in $[0,\infty].$

Denoted by $\varphi=\mathscr{L}^{-1}(\check{\varphi})$ and $\bm{u}=\mathscr{L}^{-1}(\check{\bm{u}})$, we have from the Parseval identity (\ref{A.5}) and (\ref{3.4})-(\ref{3.5}) and trace theorem that
\begin{equation*}
\begin{aligned}
&\int_0^{\infty}e^{-2s_1t}(\Vert\nabla\varphi\Vert_{L^2(\Omega_R)^3}^2+\Vert\partial_t\varphi\Vert_{L^2(\Omega_R)}^2
+\Vert\nabla\bm{u}\Vert_{F(\Omega)}^2+\Vert\partial_t\bm{u}\Vert_{L^2(\Omega)^3}^2)dt\\
={}&\frac{1}{2\pi}\int_{-\infty}^{\infty}(\Vert\nabla\check{\varphi}\Vert_{L^2(\Omega_R)^3}^2+\Vert s\check{\varphi}\Vert_{L^2(\Omega_R)}^2
+\Vert\nabla\check{\bm{u}}\Vert_{F(\Omega)}^2+\Vert s\check{\bm{u}}\Vert_{L^2(\Omega)^3}^2)ds_2\\
\lesssim{}&\int_{-\infty}^{\infty}\frac{(1+|s|)^4}{s_1^2}\Vert\check{\tilde{\varphi}}^{inc}\Vert_{H^1(\Omega_R)}^2ds_2\\
\lesssim{}&s_1^{-2}\int_{-\infty}^{\infty}(\Vert\mathscr{L}(\tilde{\varphi}^{inc})\Vert_{H^1(\Omega_R)}^2+\Vert \mathscr{L}(\partial_t\tilde{\varphi}^{inc})\Vert_{H^1(\Omega_R)}^2+\Vert \mathscr{L}(\partial_t^2\tilde{\varphi}^{inc})\Vert_{H^1(\Omega_R)}^2)ds_2\\
\lesssim{}&s_1^{-2}\int_0^{\infty}e^{-2s_1t}(\Vert\tilde{\varphi}^{inc}\Vert_{H^1(\Omega_R)}^2+\Vert \partial_t{\tilde{\varphi}^{inc}}\Vert_{H^1(\Omega_R)}^2+\Vert \partial_t^2{\tilde{\varphi}^{inc}}\Vert_{H^1(\Omega_R)}^2)dt.
\end{aligned}
\end{equation*}
which shows that
\begin{align*}
&\varphi(x,t)\in L^2\big(0,T;\tilde{H^1_0}(\Omega_R)\big)\cap H^1\left(0,T;L^2(\Omega_R)\right),\\
&\bm{u}(x,t)\in L^2\left(0,T;H^1(\Omega)^3\right)\cap H^1\left(0,T;L^2(\Omega)^3\right).
\end{align*}
Next, we shall prove the stability of solution with respect to the initial-boundary conditions. To this end,
we define the energy function
$$\varepsilon_1(t)=e_1(t)+e_2(t),\quad for\; t\in(0,T)$$
with
\begin{align*}
&e_1(t)=\Vert\frac{\rho_0^{1/2}}{c}\partial_t\varphi\Vert_{L^2(\Omega_R)}^2+\Vert\rho_0^{1/2}\nabla\varphi\Vert_{L^2(\Omega_R)}^2,\\
&e_2(t)=\Vert\rho_e^{1/2}\partial_t\bm{u}\Vert_{L^2(\Omega)^3}^2+\Vert(\lambda+\mu)^{1/2}\nabla\cdot\bm{u}\Vert_{L^2(\Omega)}^2+\Vert\mu^{1/2}\nabla\bm{u}\Vert_{F(\Omega)}^2.
\end{align*}
Note that $\varepsilon(\cdot)$ can be written as
\begin{equation}\label{3.12}
\varepsilon_1(t)-\varepsilon_1(0)=\int_0^t\varepsilon_1^{'}(\tau)d\tau=\int_0^t\Big(e_1^{'}(\tau)+e_2^{'}(\tau)\Big)d\tau.
\end{equation}
Using the system (\ref{reduced}) and integrating by parts, we get
\begin{equation}\label{3.13}
\begin{aligned}
\int_0^te_1^{'}(\tau)d\tau&=2\rho_0Re\int_0^t\int_{\Omega_R}\left(\frac{1}{c^2}\partial_\tau^2\varphi\cdot\partial_\tau\bar{\varphi}+\partial_\tau(\nabla\varphi)\cdot\nabla\bar{\varphi}\right)dxd\tau\\
&=2\rho_0Re\int_0^t\int_{\Omega_R}\left(\Delta\varphi\cdot\partial_\tau\bar{\varphi}+\partial_\tau(\nabla\varphi)\cdot\nabla\bar{\varphi}\right)dxd\tau\\
&=\rho_0\int_0^t\int_{\Omega_R}2Re\left(-\nabla\varphi\cdot\nabla(\partial_\tau\bar{\varphi})+\partial_\tau(\nabla\varphi)\cdot\nabla\bar{\varphi}\right)dxd\tau\\
&\quad+2Re\rho_0\int_0^t\int_{\Gamma_R^+}\partial_{\n}\varphi\cdot\partial_\tau\bar{\varphi}d\gamma d\tau-2Re\rho_0\int_0^t\int_{\Gamma}\partial_{\n}\varphi\cdot\partial_\tau\bar{\varphi}d\gamma d\tau\\
&=2Re\rho_0\int_0^t\int_{\Gamma_R^+}(\mathscr{T}[\varphi]+\rho)\partial_\tau\bar{\varphi}d\gamma d\tau+2Re\rho_0\int_0^t\int_{\Gamma}\frac{\partial{\bm{u}}}{\partial \tau}\cdot \n\partial_\tau\bar{\varphi}d\gamma d\tau.
\end{aligned}
\end{equation}
and
\begin{align}\label{3.14}
&\int_0^te_2^{'}(\tau)d\tau\notag\\
={}&2Re\int_0^t\int_{\Omega}\Big(\rho_e\partial_\tau^2\bm{u}\cdot\partial_\tau\bar{\bm{u}}+(\lambda+\mu)\partial_\tau(\nabla\cdot\bm{u})\cdot\nabla\cdot\bar{\bm{u}}+\mu\partial_\tau(\nabla\bm{u}):\nabla\bar{\bm{u}}\Big)dxd\tau\notag\\
={}&\int_0^t\int_{\Omega}2Re\Big(\Delta^{*}\bm{u}\cdot\partial_\tau\bar{\bm{u}}+(\lambda+\mu)\partial_\tau(\nabla\cdot\bm{u})\cdot\nabla\cdot\bar{\bm{u}}+\mu\partial_\tau(\nabla\bm{u}):\nabla\bar{\bm{u}}\Big)dxd\tau\notag\\
={}&\int_0^t\int_{\Omega}2Re\Big(-\mu\nabla\bm{u}:\nabla(\partial_\tau\bar{\bm{u}})-(\lambda+\mu)(\nabla\cdot\bm{u})(\nabla\cdot(\partial_\tau\bar{\bm{u}}))\notag\\
&+(\lambda+\mu)\partial_\tau(\nabla\cdot\bm{u})\cdot(\nabla\cdot\bar{\bm{u}})+\mu\partial_\tau(\nabla\bm{u}):\nabla\bar{\bm{u}}\Big)dxd\tau
+2Re\int_0^t\int_{\Gamma}\sigma(\check{\bm{u}})\n\cdot\partial_\tau\bar{\bm{u}}d\gamma d\tau\notag\\
={}&-2Re\int_0^t\int_{\Gamma}\rho_0\partial_\tau\varphi \n\cdot\partial_\tau\bar{\bm{u}}d\gamma d\tau.
\end{align}
Combining (\ref{3.12})-(\ref{3.14}) with $\varepsilon_1(0)=0,$ we have
$$\varepsilon_1(t)=2Re\rho_0\int_0^t\int_{\Gamma_R^+}\mathscr{T}[\varphi]\cdot\partial_\tau\bar{\varphi}d\gamma d\tau+2Re\rho_0\int_0^t\int_{\Gamma_R^+}\rho\cdot\partial_\tau\bar{\varphi}d\gamma d\tau.$$
Using Lemma \ref{lem3.2} and the trace theorem, we arrive at the following estimate
\begin{align}\label{3.15}
&\Vert\partial_t\varphi\Vert_{L^2(\Omega_R)}^2+\Vert\nabla\varphi\Vert_{L^2(\Omega_R)^3}^2\notag\\
\lesssim{}&\varepsilon_1(t)\leq{}2Re\rho_0\int_0^t\int_{\Gamma_R^+}\rho\cdot\partial_\tau\bar{\varphi}d\gamma d\tau\notag\\
\lesssim{}&\int_0^t\Vert\rho\Vert_{H^{-1/2}(\Gamma_R^+)}\cdot\Vert\partial_\tau\varphi\Vert_{H^{1}(\Omega_R)}d\tau\notag\\
\lesssim{}&\max\limits_{t\in[0,T]}\Vert\partial_t\varphi\Vert_{H^{1}(\Omega_R)}\Vert\rho\Vert_{L^1(0,T;H^{-1/2}(\Gamma_R^+))}\notag.\\
\leq{}&\epsilon\max\limits_{t\in[0,T]}(\Vert\partial_t\varphi\Vert_{L^2(\Omega_R)}^2+\Vert\nabla\partial_t\varphi\Vert_{L^2(\Omega_R)^3}^2)+\frac{1}{4\epsilon}\Vert\rho\Vert^2_{L^1(0,T;H^{-1/2}(\Gamma_R^+))}
\end{align}
where the $\epsilon-$inequality has been used in deriving the last inequality.\\
Noticing the right-hand side of (\ref{3.15}) contains the term $\nabla\partial_t\varphi$ which cannot be controlled by the left-hand side of (\ref{3.15}), we then need to
consider the following system
\begin{equation}\label{3.16}
\begin{cases}
\rho_e\partial_t^2(\partial_t\bm u)-\Delta^*(\partial_t\bm u)=0 & {\rm in}\;\; \Omega, t>0 \\
\partial_t^2(\partial_t\varphi)-c^2\Delta(\partial_t\varphi)=0 & {\rm in}\;\; \Omega_R, t>0\\
\partial_t^2\bm u\Big|_{t=0}=\frac{1}{\rho_e}\Delta^*\bm u\Big|_{t=0}=\textbf 0& {\rm in}\;\;  \Omega\\
\partial_t^2\varphi\Big|_{t=0}=c^2\Delta\varphi\Big|_{t=0}=0& {\rm in}\;\;  \Omega_R\\
\bm{\sigma}(\partial_t\bm{u})\n =-\rho_0\partial_t^2\varphi\n& {\rm on}\;\; \Gamma, t>0 \\
\partial_t^2\bm{u}\cdot \n=-\partial_{\n}(\partial_t\varphi)& {\rm on}\;\; \Gamma, t>0\\
\partial_t\varphi=0 & {\rm on}\;\;\Gamma_0, t>0\\
\partial_{\n} (\partial_t\varphi)=\mathscr{T}[\partial_t\varphi]+\partial_t\rho & {\rm on}\;\; \Gamma_R^+, t>0.
\end{cases}
\end{equation}
In order to study (\ref{3.16}), we define another energy function
\ben
\varepsilon_2(t)=e_3(t)+e_4(t)
\enn
with
\begin{align*}
&e_3(t)=\Vert\frac{\rho_0^{1/2}}{c}\partial_t^2\varphi\Vert_{L^2(\Omega_R)}^2+\Vert\rho_0^{1/2}\nabla(\partial_t\varphi)\Vert_{L^2(\Omega_R)}^2,\\
&e_4(t)=\Vert\rho_e^{1/2}\partial_t^2\bm{u}\Vert_{L^2(\Omega)^3}^2+\Vert(\lambda+\mu)^{1/2}\nabla\cdot(\partial_t\bm{u})\Vert_{L^2(\Omega)}^2+\Vert\mu^{1/2}\nabla(\partial_t\bm{u})\Vert_{F(\Omega)}^2.
\end{align*}
Similarly, since
\begin{equation}\label{3.17}
\varepsilon_2(t)-\varepsilon_2(0)=\int_0^t\varepsilon_2^{'}(\tau)d\tau=\int_0^t\Big(e_3^{'}(\tau)+e_4^{'}(\tau)\Big)d\tau,
\end{equation}
we have from the similar steps between (\ref{3.13}) and (\ref{3.15}) with $\varphi$ and $\bm u$ replaced by $\partial_\tau\varphi$ and $\partial_\tau\bm u$, respectively, that
\begin{align}\label{3.18}
\varepsilon_2(t)={}&2Re\rho_0\int_0^t\int_{\Gamma_R^+}\mathscr{T}[\partial_\tau\varphi]\cdot\partial_\tau^2\bar{\varphi}d\gamma d\tau+2Re\rho_0\int_0^t\int_{\Gamma_R^+}\partial_\tau\rho\cdot\partial_\tau^2\bar{\varphi}d\gamma d\tau.\notag\\
\leq{}&2Re\rho_0\int_0^t\int_{\Gamma_R^+}\partial_\tau\rho\cdot\partial_\tau^2\bar{\varphi}d\gamma d\tau\notag\\
={}&2Re\rho_0\Big(\int_{\Gamma_R^+}\partial_\tau\rho\cdot\partial_\tau\bar\varphi\Big|_0^td\gamma-\int_0^t\langle\partial_\tau^2\rho,\partial_\tau\varphi\rangle_{\Gamma_R^+}d\tau\Big)\notag\\
\lesssim{}&\max\limits_{t\in[0,T]}\Vert\partial_t\varphi\Vert_{H^{1}(\Omega_R)}\Big(\max\limits_{t\in[0,T]}\Vert\partial_t\rho\Vert_{H^{-1/2}(\Gamma_R^+)}+\Vert\partial_t^2\rho\Vert_{L^1(0,T;H^{-1/2}(\Gamma_R^+))}\Big)\notag.\\
\leq{}&\epsilon\max\limits_{t\in[0,T]}(\Vert\partial_t\varphi\Vert_{L^2(\Omega_R)}^2+\Vert\nabla\partial_t\varphi\Vert_{L^2(\Omega_R)^3}^2)\notag\\
+&\frac{1}{4\epsilon}\max\limits_{t\in[0,T]}\Vert\partial_t\rho\Vert_{H^{-1/2}(\Gamma_R^+)}^2+\frac{1}{4\epsilon}\Vert\partial_t^2\rho\Vert_{L^1(0,T;H^{-1/2}(\Gamma_R^+))}^2,
\end{align}
where we have used Lemma \ref{lem3.3} and the $\epsilon-$inequality to obtain (\ref{3.18}).\\
Now, by (\ref{3.15}) and (\ref{3.18}) we arrive at
\begin{align}
&\Vert\partial_t\varphi\Vert_{L^2(\Omega_R)}^2+\Vert\nabla\partial_t\varphi\Vert^2_{L^2(\Omega_R)^3}+\Vert\partial_t\bm{u}\Vert^2_{L^2(\Omega)^3}+\Vert\nabla\cdot\bm{u}\Vert^2_{L^2(\Omega)}+\Vert\nabla\bm{u}\Vert^2_{F(\Omega)}\notag\\
\lesssim{}&\varepsilon_1(t)+\varepsilon_2(t)\notag\\
\lesssim{}&2\epsilon\max\limits_{t\in[0,T]}(\Vert\partial_t\varphi\Vert_{L^2(\Omega_R)}^2+\Vert\nabla\partial_t\varphi\Vert_{L^2(\Omega_R)^3}^2)\notag\\
+&\frac{1}{4\epsilon}\Vert\rho\Vert^2_{L^1(0,T;H^{-1/2}(\Gamma_R^+))}+\frac{1}{4\epsilon}\max\limits_{t\in[0,T]}\Vert\partial_t\rho\Vert_{H^{-1/2}(\Gamma_R^+)}^2+\frac{1}{4\epsilon}\Vert\partial_t^2\rho\Vert_{L^1(0,T;H^{-1/2}(\Gamma_R^+))}^2.\notag
\end{align}
Finally, we choose $\epsilon>0$ small enough such that $2\epsilon<1/2$ and apply Cauchy-Schwartz inequality to  obtain
\begin{align}
&\max\limits_{t\in[0,T]}\left[(\Vert\partial_t\varphi\Vert_{L^2(\Omega_R)}+\Vert\nabla\partial_t\varphi\Vert_{L^2(\Omega_R)^3})+(\Vert\partial_t\bm{u}\Vert_{L^2(\Omega)^3}+\Vert\nabla\cdot\bm{u}\Vert_{L^2(\Omega)}+\Vert\nabla\bm{u}\Vert_{F(\Omega)})\right]\notag\\
\lesssim{}&\Vert\rho\Vert_{L^1(0,T;H^{-1/2}(\Gamma_R^+))}+\max\limits_{t\in[0,T]}\Vert\partial_t\rho\Vert_{H^{-1/2}(\Gamma_R^+)}+\Vert\partial_t^2\rho\Vert_{L^1(0,T;H^{-1/2}(\Gamma_R^+))},\notag
\end{align}
which completes the proof.
\end{proof}

\subsection{A priori estimate}
Motivated by \cite{LWW15} or \cite{Bao2018}, we will study in this subsection the fluid-solid interaction problem (\ref{2.8}) in a direct way. The goal is to derive an a priori stability estimate for both the acoustic field $\varphi$ and elastic displacement $\bm{u}$ with an explicit dependence on the time variable.

Recalling that the reduced system (\ref{2.8}),
 for each $t>0$ and $(\psi,\bm{v})\in H=\tilde{H_0^1}(\Omega_R)\times H^1(\Omega)^3$,  we easily have
$$\int_\Omega(\rho_e\frac{\partial^2\bm{u}}{\partial t^2}\cdot\bar{\bm{v}}-\Delta^*\bm u\cdot\bar{\bm v})dx+\rho_0\int_{\Omega_R}\Big(\frac{1}{c^2}\frac{\partial^2\varphi}{\partial t^2}-\Delta\varphi\Big)\bar{\psi}dx=0.$$
By Betti's formula in elastic field and Green's theorem in acoustic field with the transmission conditions, we conclude the variational problem of (\ref{2.8}) in the time domain which is to find $(\varphi,\bm{u})\in H,\ \forall\;t>0,$ such that
\begin{align}\label{3.22}
&\int_\Omega\rho_e\frac{\partial^2\bm{u}}{\partial t^2}\cdot\bar{\bm{v}}dx+\int_{\Omega_R}\frac{\rho_0}{c^2}\frac{\partial^2\varphi}{\partial t^2}\cdot\bar{\psi}dx\notag\\
={}&-\int_\Omega\Big[\mu(\nabla\bm{u}:\nabla \bar{\bm{v}})+(\lambda+\mu)(\nabla\cdot \bm{u})(\nabla\cdot \bar{\bm{v}})\Big]dx-\rho_0\int_{\Omega_R}\nabla\varphi\cdot\nabla\bar{\psi}dx\notag\\
&+\rho_0\int_{\Gamma_R^+}\mathscr{T}[\varphi]\cdot\bar{\psi}d\gamma+\rho_0\int_{\Gamma_R^+}\rho\cdot\bar{\psi}d\gamma+\rho_0\int_{\Gamma}(\frac{\partial\bm{u}}{\partial t}\cdot\n\bar{\psi}-\frac{\partial \varphi}{\partial t}\n\cdot\bar{\bm{v}})d\gamma.
\end{align}
To show the stability of the solution of (\ref{3.22}), the following two lemmas play an important role in the subsequent analysis.
\begin{lem}\label{lem3.5}
Given $\xi\geq 0$ and $\varphi(\cdot,t)\in L^2(0,\xi;H^{1/2}(\Gamma_R^+))$, it holds that
$$Re\int_{\Gamma_R^+}\int_0^\xi\Big(\int_0^t\mathscr{T}[\varphi](x,\tau)d\tau\Big)\bar{\varphi}(x,t)dtd\gamma\leq 0.$$
\end{lem}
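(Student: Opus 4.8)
The plan is to reduce this statement to the already-proved Lemma \ref{lem3.2} by a time-antiderivative trick. First I would introduce the primitive $\Psi(x,t):=\int_0^t\varphi(x,\tau)\,d\tau$, which satisfies $\Psi(\cdot,0)=0$ and $\partial_t\Psi=\varphi$. Since $\varphi\in L^2(0,\xi;H^{1/2}(\Gamma_R^+))$, the function $\Psi$ lies in the same space (indeed it is continuous in $t$ with values in $H^{1/2}(\Gamma_R^+)$), so $\Psi$ is an admissible argument for Lemma \ref{lem3.2}.

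The crucial observation is that $\mathscr{T}$ commutes with integration in time, namely
$$\int_0^t\mathscr{T}[\varphi](x,\tau)\,d\tau=\mathscr{T}[\Psi](x,t).$$
I would establish this through the characterization $\mathscr{T}=\mathscr{L}^{-1}\circ\mathscr{B}\circ\mathscr{L}$. Taking Laplace transforms, one has $\mathscr{L}(\Psi)=s^{-1}\check{\varphi}$ and $\mathscr{L}\big(\int_0^t\mathscr{T}[\varphi]\,d\tau\big)=s^{-1}\mathscr{L}(\mathscr{T}[\varphi])=s^{-1}\mathscr{B}[\check{\varphi}]$. Because $\mathscr{B}$ acts only in the spatial variable through the spherical-harmonic expansion (\ref{2.13}), it commutes with scalar multiplication by $s^{-1}$; hence $s^{-1}\mathscr{B}[\check{\varphi}]=\mathscr{B}[s^{-1}\check{\varphi}]=\mathscr{B}[\mathscr{L}(\Psi)]=\mathscr{L}(\mathscr{T}[\Psi])$. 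Uniqueness of the inverse Laplace transform then yields the commutation identity.

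With this identity in hand, the integrand in the statement becomes $\mathscr{T}[\Psi](x,t)\,\bar{\varphi}(x,t)=\mathscr{T}[\Psi](x,t)\,\partial_t\bar{\Psi}(x,t)$, so after interchanging the order of integration by Fubini the left-hand side equals
$$Re\int_0^\xi\int_{\Gamma_R^+}\mathscr{T}[\Psi]\cdot\partial_t\bar{\Psi}\,d\gamma\,dt,$$
which is $\leq 0$ by Lemma \ref{lem3.2} applied to $\omega=\Psi$ (with upper limit $\xi$ in place of $t$), since $\Psi(\cdot,0)=0$. This closes the argument.

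I expect the main obstacle to be the rigorous justification of the commutation identity $\int_0^t\mathscr{T}[\varphi]\,d\tau=\mathscr{T}[\Psi]$: one must ensure the relevant Laplace transforms exist and the inversion is valid for $\varphi$ of the assumed (merely $L^2$-in-time) regularity, and check that the extension-by-zero and Parseval machinery underlying Lemma \ref{lem3.2} transfers to $\Psi$. A secondary, routine point is the Fubini interchange between the spatial integration over $\Gamma_R^+$ and the temporal integrals, which is justified by the $L^2$ bounds together with the boundedness of $\mathscr{B}$ from $H^{1/2}(\Gamma_R^+)$ to $H^{-1/2}(\Gamma_R^+)$.
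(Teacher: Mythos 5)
Your argument is correct at the paper's level of rigor, but it is organized along a genuinely different route than the paper's own proof. The paper never forms the primitive $\Psi$ and never invokes Lemma \ref{lem3.2}: it proves Lemma \ref{lem3.5} directly by repeating the template of Lemma \ref{lem3.2}'s proof, namely extend $\varphi$ by zero outside $[0,\xi]$, use (\ref{A.3}) to identify $\int_0^t\mathscr{T}[\varphi]\,d\tau$ with $\mathscr{L}^{-1}\big(s^{-1}\mathscr{B}[\check{\varphi}]\big)$, apply the Parseval identity (\ref{A.5}) to rewrite the exponentially weighted time integral as $\frac{1}{2\pi}\int_{-\infty}^{\infty}Re\langle s^{-1}\mathscr{B}[\check{\varphi}],\check{\varphi}\rangle_{\Gamma_R^+}\,ds_2$, conclude nonpositivity from Lemma \ref{lem2.1}, and finally let $s_1\to 0$. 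Your proof packages the same $s$-domain fact (that the scalar $s^{-1}$ commutes with the linear, $s$-pointwise operator $\mathscr{B}$) as the time-domain operator identity $\int_0^t\mathscr{T}[\varphi]\,d\tau=\mathscr{T}[\Psi]$, and then quotes Lemma \ref{lem3.2} with $\omega=\Psi$; what this buys is modularity, since Lemma \ref{lem3.5} becomes literally Lemma \ref{lem3.2} applied to an antiderivative and the Parseval/limiting machinery is not repeated. What the paper's direct computation buys is that it never has to confront a subtlety your reduction relies on tacitly: your commutation identity is derived for the primitive of the zero-extension of $\varphi$, which equals the constant $\Psi(\cdot,\xi)$, not zero, for $t>\xi$, whereas inside Lemma \ref{lem3.2}'s proof the argument $\omega=\Psi$ is extended by zero outside $[0,\xi]$; these two extensions of $\Psi$ differ, so for the two occurrences of $\mathscr{T}[\Psi]$ to agree on $[0,\xi]$ you implicitly need causality of $\mathscr{T}$ (that $\mathscr{T}[\omega]$ restricted to $[0,\xi]$ depends only on $\omega|_{[0,\xi]}$). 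Causality does hold for $\mathscr{T}=\mathscr{L}^{-1}\circ\mathscr{B}\circ\mathscr{L}$, and the paper is equally informal about such points, so this is a caveat to record in your write-up rather than a gap.
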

\begin{proof}
First, we extend $\varphi$ by 0 with respect to $t$ in the interval $[0,\xi]$, also referring to it as $\varphi$. Following (A.3), Lemma A.1, and Lemma \ref{lem2.1}, we obtain 
\begin{align*}
&Re\int_{\Gamma_R^+}\int_0^{\infty}e^{-2s_1t}\Big(\int_0^t\mathscr{T}[\varphi](x,\tau)d\tau\Big)\bar{\varphi}(x,t)dtd\gamma\\
={}&\frac{1}{2\pi}Re\int_{-\infty}^{\infty}\int_{\Gamma_R^+}s^{-1}\mathscr{B}\circ\mathscr{L}(\varphi)\cdot\mathscr{L}(\bar{\varphi})(s)d\gamma ds_2\\
={}&\frac{1}{2\pi}\int_{-\infty}^{\infty} Re\langle s^{-1}\mathscr{B}[\check{\varphi}],\check{\varphi}\rangle_{\Gamma_R^+}ds_2\\
\leq{}&0,
\end{align*}
which completes the proof by taking $s_1\rightarrow 0.$
\end{proof}
\begin{lem}\label{lem3.6}
Given $\xi\geq 0$ and $\varphi(\cdot,t)\in L^2(0,\xi;H^{1/2}(\Gamma_R^+))$ with $\varphi(\cdot,0)=0$, it holds that
$$Re\int_{\Gamma_R^+}\int_0^\xi\Big(\int_0^t\mathscr{T}[\partial_{\tau}{\varphi}](x,\tau)d\tau\Big)\partial_{\tau}\bar{\varphi}(x,t)dtd\gamma\leq 0.$$
\end{lem}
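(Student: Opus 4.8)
The plan is to recognize that the desired inequality is nothing but Lemma \ref{lem3.5} applied with $\varphi$ replaced by its time derivative $\partial_t\varphi$, so I would reproduce that argument almost verbatim. The one genuinely new point is the treatment of the derivative, and this is exactly where the additional hypothesis $\varphi(\cdot,0)=0$ is consumed: it ensures that extending $\varphi$ by zero past $t=0$ creates no jump, so that $\partial_t\varphi$ and its zero-extension agree and the Laplace transform obeys the clean rule $\mathscr{L}(\partial_t\varphi)=s\check\varphi$ with no surviving initial-value term.

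Concretely, I would first extend $\varphi$ (hence $\partial_t\varphi$) by $0$ outside $[0,\xi]$, insert the convergence factor $e^{-2s_1 t}$ with $s_1>0$, and write the quantity to be bounded as $Re\int_{\Gamma_R^+}\int_0^\infty e^{-2s_1 t}\big(\int_0^t\mathscr{T}[\partial_\tau\varphi](x,\tau)\,d\tau\big)\,\partial_t\bar\varphi(x,t)\,dt\,d\gamma$. Because integration in time corresponds to multiplication by $s^{-1}$ under the Laplace transform and $\mathscr{T}=\mathscr{L}^{-1}\circ\mathscr{B}\circ\mathscr{L}$, the inner factor $\int_0^t\mathscr{T}[\partial_\tau\varphi]\,d\tau$ has Laplace transform $s^{-1}\mathscr{B}[\mathscr{L}(\partial_t\varphi)]=s^{-1}\mathscr{B}[s\check\varphi]=\mathscr{B}[\check\varphi]$, by linearity of $\mathscr{B}$ together with $\varphi(\cdot,0)=0$, while the outer factor $\partial_t\varphi$ has Laplace transform $s\check\varphi$.

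Applying the Parseval identity (\ref{A.5}) then turns the time-domain expression into $\frac{1}{2\pi}\int_{-\infty}^{\infty}Re\langle\mathscr{B}[\check\varphi],s\check\varphi\rangle_{\Gamma_R^+}\,ds_2$. Using $\bar s=|s|^2 s^{-1}$, the integrand equals $|s|^2\,Re\langle s^{-1}\mathscr{B}[\check\varphi],\check\varphi\rangle_{\Gamma_R^+}$, which is precisely the quantity shown to be nonpositive in Lemma \ref{lem2.1}, multiplied by the positive weight $|s|^2$; note this is the same frequency-domain form that appears in the proof of Lemma \ref{lem3.2}. Hence the integrand is $\leq 0$ for every $s_2\in\R$, the whole integral is $\leq 0$, and letting $s_1\rightarrow 0$ recovers the claim.

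The step I expect to demand the most care is justifying the two structural facts rather than any single computation: that $\partial_t\varphi$ together with its zero-extension is admissible for the Laplace transform so that $\mathscr{L}(\partial_t\varphi)=s\check\varphi$ holds (the reason $\varphi(\cdot,0)=0$ is imposed), and that the integrands lie in the weighted $L^2$-in-time spaces needed for (\ref{A.5}) to be valid. Once these are secured, the reduction to Lemma \ref{lem2.1} is immediate and the proof closes exactly as in Lemmas \ref{lem3.2} and \ref{lem3.5}.
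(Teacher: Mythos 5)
Your proposal is correct and follows essentially the same route as the paper: the paper's own proof of Lemma \ref{lem3.6} is literally the one-line remark that the argument of Lemma \ref{lem3.5} applies with $\varphi$ replaced by $\partial_\tau\varphi$, and your write-up is exactly that argument carried out (zero extension, Parseval identity (\ref{A.5}), and reduction to Lemma \ref{lem2.1} via the weight $|s|^2$, just as in the proof of Lemma \ref{lem3.2}). Your observation that the hypothesis $\varphi(\cdot,0)=0$ is what licenses $\mathscr{L}(\partial_t\varphi)=s\check\varphi$ without an initial-value term is precisely the role it plays in the paper's framework.
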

\begin{proof}
Since the proof is similar to that in Lemma \ref{lem3.5} with $\varphi$ replaced by $\partial_\tau\varphi$,
so we here omit its detailed proof.
\end{proof}
\begin{theorem}
 Let $(\varphi,\bm{u})$ be the solution of (\ref{3.22}). Under the assumption of (\ref{3.11}), it holds for any $T>0$ that
 \begin{align}\label{3.23}
&\Vert\varphi\Vert_{L^{\infty}(0,T;L^2(\Omega_R))}+\Vert\nabla\varphi\Vert_{L^{\infty}(0,T;L^2(\Omega_R)^3)}+\Vert\partial_t\varphi\Vert_{L^{\infty}(0,T;L^2(\Omega_R))}\notag\\
&+\Vert\bm{u}\Vert_{L^{\infty}(0,T;L^2(\Omega)^3)}+\Vert\partial_t\bm{u}\Vert_{L^{\infty}(0,T;L^2(\Omega)^3)}+\Vert\nabla\bm{u}\Vert_{L^{\infty}(0,T;F(\Omega)}+\Vert\nabla\cdot\bm{u}\Vert_{L^{\infty}(0,T;L^2(\Omega))}\notag\\
\lesssim{}&T\Vert\rho\Vert_{L^1(0,T;H^{-1/2}(\Gamma_R^+))}+\Vert\partial_t\rho\Vert_{L^1(0,T;H^{-1/2}(\Gamma_R^+))},
\end{align}
and
\begin{align}\label{3.24}
&\Vert\varphi\Vert_{L^{2}(0,T;L^2(\Omega_R))}+\Vert\nabla\varphi\Vert_{L^{2}(0,T;L^2(\Omega_R)^3)}+\Vert\partial_t\varphi\Vert_{L^{2}(0,T;L^2(\Omega_R))}\notag\\
&+\Vert\bm{u}\Vert_{L^{2}(0,T;L^2(\Omega)^3)}+\Vert\partial_t\bm{u}\Vert_{L^{2}(0,T;L^2(\Omega)^3)}+\Vert\nabla\bm{u}\Vert_{L^{2}(0,T;F(\Omega)}+\Vert\nabla\cdot\bm{u}\Vert_{L^{2}(0,T;L^2(\Omega))}\notag\\
\lesssim{}&T^{\frac{3}{2}}\Vert\rho\Vert_{L^1(0,T;H^{-1/2}(\Gamma_R^+))}+T^{\frac{1}{2}}\Vert\partial_t\rho\Vert_{L^1(0,T;H^{-1/2}(\Gamma_R^+))}.
\end{align}
\end{theorem}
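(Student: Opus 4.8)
The plan is to prove the $L^\infty$-in-time bound (\ref{3.23}) first, by deriving and combining two energy identities directly in the time domain, and then to deduce the $L^2$-in-time bound (\ref{3.24}) from it using only $\|g\|_{L^2(0,T)}\le T^{1/2}\|g\|_{L^\infty(0,T)}$, which accounts exactly for the extra factor $T^{1/2}$ relating the right-hand sides of (\ref{3.24}) and (\ref{3.23}). Throughout, the homogeneous initial data $\varphi(\cdot,0)=\partial_t\varphi(\cdot,0)=0$, $\bm u(\cdot,0)=\partial_t\bm u(\cdot,0)=0$ are used repeatedly.

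For the first identity I would integrate the variational formulation (\ref{3.22}) once in $t$, writing $\Phi(\cdot,t)=\int_0^t\varphi\,d\tau$ and $\bm\Psi(\cdot,t)=\int_0^t\bm u\,d\tau$, and then take the test pair $(\psi,\bm v)=(\varphi,\bm u)$. The inertial terms become $\tfrac12\tfrac{d}{dt}\big(\tfrac{\rho_0}{c^2}\|\varphi\|_{L^2(\Omega_R)}^2+\rho_e\|\bm u\|_{L^2(\Omega)^3}^2\big)$, while the acoustic and elastic volume terms become $\tfrac12\tfrac{d}{dt}$ of $\rho_0\|\nabla\Phi\|^2$, $\mu\|\nabla\bm\Psi\|_{F(\Omega)}^2$ and $(\lambda+\mu)\|\nabla\cdot\bm\Psi\|^2$. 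The two interface integrals on $\Gamma$ combine into a purely imaginary quantity whose real part vanishes, and the transparent-boundary term is exactly $\rho_0\,\mathrm{Re}\int_{\Gamma_R^+}\big(\int_0^t\mathscr T[\varphi]\,d\tau\big)\bar\varphi$. Integrating in $t$ over $[0,\xi]$ and invoking Lemma \ref{lem3.5} to discard this term, I control $\|\varphi(\xi)\|_{L^2(\Omega_R)}$, $\|\bm u(\xi)\|_{L^2(\Omega)^3}$ and the time-integrated gradients by $\rho_0\,\mathrm{Re}\int_0^\xi\int_{\Gamma_R^+}\big(\int_0^t\rho\,d\tau\big)\bar\varphi$, which by Cauchy--Schwarz and the trace theorem is $\lesssim T\|\rho\|_{L^1(0,T;H^{-1/2}(\Gamma_R^+))}\max_{[0,T]}\|\varphi\|_{H^1(\Omega_R)}$.

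For the second identity I would test (\ref{3.22}) directly with $(\partial_t\varphi,\partial_t\bm u)$, producing $\tfrac12\tfrac{d}{dt}E_1$ with $E_1=\tfrac{\rho_0}{c^2}\|\partial_t\varphi\|^2+\rho_0\|\nabla\varphi\|^2+\rho_e\|\partial_t\bm u\|^2+\mu\|\nabla\bm u\|_{F(\Omega)}^2+(\lambda+\mu)\|\nabla\cdot\bm u\|^2$; again the $\Gamma$-coupling cancels upon taking the real part, and the boundary term, rewritten as $\int_0^t\mathscr T[\partial_\tau\varphi]\,d\tau$ paired with $\partial_t\varphi$, is nonpositive by Lemma \ref{lem3.6}. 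The surviving source $\rho_0\,\mathrm{Re}\int_0^\xi\int_{\Gamma_R^+}\rho\,\partial_t\bar\varphi$ is where the main obstacle lies: estimating it by $\|\rho\|_{H^{-1/2}}\|\partial_t\varphi\|_{H^{1/2}(\Gamma_R^+)}$ would require $\|\nabla\partial_t\varphi\|$, which $E_1$ does not control. I would resolve this by integrating by parts in $t$: using $\varphi(\cdot,0)=0$ and $\rho(\cdot,0)=0$ (causality), the source becomes $\mathrm{Re}\int_{\Gamma_R^+}\rho(\xi)\bar\varphi(\xi)-\mathrm{Re}\int_0^\xi\int_{\Gamma_R^+}\partial_\tau\rho\,\bar\varphi$, in which the derivative now falls on $\rho$ and only $\varphi$ (not $\partial_t\varphi$) is traced; by the trace theorem this is $\lesssim\|\partial_t\rho\|_{L^1(0,T;H^{-1/2}(\Gamma_R^+))}\max_{[0,T]}\|\varphi\|_{H^1(\Omega_R)}$.

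Finally I would close the argument by a bootstrap on $X:=\max_{[0,T]}\|\varphi\|_{H^1(\Omega_R)}$: the first identity bounds $\max\|\varphi\|_{L^2}^2$ and the second bounds $\max\|\nabla\varphi\|^2$ together with the full $\max E_1$, so adding them gives $X^2\lesssim\big(T\|\rho\|_{L^1}+\|\partial_t\rho\|_{L^1}\big)X$, whence $X\lesssim T\|\rho\|_{L^1(0,T;H^{-1/2})}+\|\partial_t\rho\|_{L^1(0,T;H^{-1/2})}$. Re-inserting this bound for $X$ into the two energy estimates and using Young's inequality to absorb the products then yields $L^\infty$ control of every term on the left of (\ref{3.23}), which is (\ref{3.23}); estimate (\ref{3.24}) follows immediately via $\|\cdot\|_{L^2(0,T)}\le T^{1/2}\|\cdot\|_{L^\infty(0,T)}$. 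The delicate points are the exact cancellation of the fluid--solid coupling on $\Gamma$, the sign information from Lemmas \ref{lem3.5}--\ref{lem3.6}, and the time integration by parts that trades one $t$-derivative of $\varphi$ for a derivative of $\rho$ so as to avoid the uncontrolled quantity $\nabla\partial_t\varphi$.
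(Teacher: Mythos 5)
Your proposal is correct and follows essentially the same energy-method route as the paper's own proof: your first identity (time-integrating the equation and testing with $(\varphi,\bm{u})$) is, via Fubini, exactly what the paper obtains by testing (\ref{3.22}) with the auxiliary functions $\Psi_1,\Psi_2$ --- the paper's identity (\ref{3.26}) is precisely this equivalence --- and your second identity (testing with $(\partial_t\varphi,\partial_t\bm{u})$) reproduces what the paper gets from the differentiated system (\ref{3.40}) tested with $\Psi_3,\Psi_4$; you then invoke the same sign Lemmas \ref{lem3.5}--\ref{lem3.6}, the same cancellation of the $\Gamma$-coupling, the same integration by parts in $t$ that trades the uncontrolled trace of $\partial_t\varphi$ for $\partial_t\rho$, and the same absorption (your bootstrap on $X=\max_{[0,T]}\Vert\varphi\Vert_{H^1(\Omega_R)}$ is the paper's $\epsilon$-inequality step in different clothing). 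The one genuine simplification is your derivation of (\ref{3.24}): you deduce it directly from (\ref{3.23}) via $\Vert g\Vert_{L^2(0,T)}\leq T^{1/2}\Vert g\Vert_{L^\infty(0,T)}$, which yields exactly the stated powers of $T$, whereas the paper re-integrates (\ref{3.49}) in $\xi$ and repeats the $\epsilon$-inequality; your route is shorter and equally rigorous.
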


\begin{proof}
For $0<\xi<T$, we introduce an auxiliary function
$$\Psi_1(x,t)=\int_t^\xi\varphi(x,\tau)d\tau,\quad x\in\Omega_R,0\leq t\leq\xi.$$
It can be easily verified that
\begin{align}\label{3.25}
\Psi_1(x,\xi)=0,\;\partial_t\Psi_1(x,t)=-\varphi(x,t).
\end{align}
For any $\phi(x,t)\in L^2\left(0,\xi;L^2(\Omega_R)\right)$, using integration by parts and condition (\ref{3.25}), we obtain
\begin{align}\label{3.26}
&\int_0^\xi\phi(x,t)\cdot\bar{\Psi}_1(x,t)dt\notag\\
={}&\int_0^\xi\int_t^\xi\bar{\varphi}(x,\tau)d\tau\cdot d\Big(\int_0^t\phi(x,\tau)d\tau\Big)dt\notag\\
={}&\int_0^\xi\int_0^t\phi(x,\tau)d\tau\cdot\bar{\varphi}(x,t)dt.
\end{align}
With the aid of (\ref{3.25}), we take the test function $\psi=\Psi_1$ in (\ref{3.22}) to have
\begin{align}\label{3.27}
&Re\frac{\rho_0}{c^2}\int_0^\xi\int_{\Omega_R}\frac{\partial^2\varphi}{\partial t^2}\cdot\bar{\Psi}_1dxdt\notag\\
={}&Re\frac{\rho_0}{c^2}\int_{\Omega_R}\int_0^\xi\Big(\partial_t(\partial_t\varphi\cdot\bar{\Psi}_1)+\partial_t\varphi\cdot\bar{\varphi}\Big)dtdx\notag\\
={}&\frac{\rho_0}{2c^2}\Vert\varphi(\cdot,\xi)\Vert_{L^2(\Omega_R)}^2.
\end{align}
By (\ref{3.26}), we also have
\begin{align*}
&Re\rho_0\int_0^\xi\int_{\Omega_R}\nabla\varphi\cdot\nabla\bar{\Psi}_1dxdt\\
={}&Re\rho_0\int_{\Omega_R}\int_0^\xi\nabla\varphi\cdot\int_t^\xi\nabla\bar{\varphi}(x,\tau)d\tau dt dx\\
={}&\rho_0\int_{\Omega_R}|\int_0^\xi\nabla\varphi(x,t)dt|^2dx-Re\rho_0\int_0^\xi\int_{\Omega_R}\nabla\varphi\cdot\nabla\bar{\Psi}_1dxdt,
\end{align*}
whence
\begin{align}\label{3.29}
Re\rho_0\int_0^\xi\int_{\Omega_R}\nabla\varphi\cdot\nabla\bar{\Psi}_1dxdt=\frac{\rho_0}{2}\int_{\Omega_R}|\int_0^\xi\nabla\varphi(x,t)dt|^2dx
\end{align}
follows.

Similarly, we now define another auxiliary function
$$\Psi_2(x,t)=\int_t^\xi\bm{u}(x,\tau)d\tau,\quad x\in\Omega,0\leq t\leq\xi$$
by $\bm{u}(x,\tau)$.
Clearly, we also have
\begin{align}\label{3.30}
\Psi_2(x,\xi)=0,\partial_t\Psi_2(x,t)=-\bm{u}(x,t).
\end{align}
For any vector $\bm{\omega}(x,t)\in L^2\left(0,\xi;L^2(\Omega)^3\right)$, we have
\begin{align}\label{3.31}
\int_0^\xi\bm{\omega}(x,t)\cdot\bar{\Psi}_2(x,t)dt=\int_0^\xi\int_0^t\bm{\omega}(x,\tau)d\tau\cdot\bar{\bm{u}}(x,t)dt,
\end{align}
which can be proved in a similar way to (\ref{3.26}).

Next, with the aid of (\ref{3.30}), we take the test function $\bm{v}=\Psi_2$ to get
\begin{align}\label{3.32}
Re\rho_e\int_0^\xi\int_{\Omega}\frac{\partial^2\bm{u}}{\partial t^2}\cdot\bar{\Psi}_2dxdt=\frac{\rho_e}{2}\Vert\bm{u}(\cdot,\xi)\Vert_{L^2(\Omega)^3}^2,
\end{align}
and
\begin{equation*}\label{3.33}
\begin{aligned}
&Re\int_0^\xi\int_{\Omega}\Big[\mu(\nabla\bm{u}:\nabla\bar{\Psi}_2)+(\lambda+\mu)(\nabla\cdot\bm{u})(\nabla\cdot\bar{\Psi}_2)\Big]dxdt\\
={}&Re\int_0^\xi\int_{\Omega}\Big[\mu(\nabla\bm{u}:\int_t^\xi\nabla\bar{\bm{u}}d\tau)+(\lambda+\mu)(\nabla\cdot\bm{u})(\int_t^\xi\nabla\cdot\bar{\bm{u}}d\tau)\Big]dxdt\\
={}&Re\mu\int_{\Omega}\int_0^\xi\nabla\bm{u}:\int_0^\xi\nabla\bar{\bm{u}}d\tau dtdx-Re\mu\int_{\Omega}\int_0^\xi\nabla\bm{u}:\int_0^t\nabla\bar{\bm{u}}d\tau dtdx\\
&+Re(\lambda+\mu)\int_{\Omega}\int_0^\xi(\nabla\cdot\bm{u})(\int_0^\xi\nabla\cdot\bar{\bm{u}}d\tau)dtdx\\
&-Re(\lambda+\mu)\int_{\Omega}\int_0^\xi(\nabla\cdot\bm{u})(\int_0^t\nabla\cdot\bar{\bm{u}}d\tau)dtdx,
\end{aligned}
\end{equation*}
using (\ref{3.31}), we can get
\begin{align}\label{3.34}
&Re\int_0^\xi\int_{\Omega}\Big[\mu(\nabla\bm{u}:\nabla\bar{\Psi}_2)+(\lambda+\mu)(\nabla\cdot\bm{u})(\nabla\cdot\bar{\Psi}_2)\Big]dxdt\notag\\
={}&\frac{1}{2}\int_{\Omega}\Big[\mu(\int_0^\xi\nabla\bm{u}dt:\int_0^\xi\nabla\bar{\bm{u}}dt)+(\lambda+\mu)|\int_0^\xi\nabla\cdot\bm{u}dt|^2\Big]dx.
\end{align}
Moreover, by simple calculations, we have
\begin{align}\label{3.35}
&Re\int_0^\xi\int_{\Gamma}\frac{\partial\bm{u}}{\partial t}\cdot\n\bar{\Psi}_1d\gamma dt-Re\int_0^\xi\int_{\Gamma}\frac{\partial \varphi}{\partial t}\n\cdot\bar{\bm{v}}(x,\tau)d\gamma dt\notag\\
={}&Re\int_{\Gamma}\int_0^\xi\frac{\partial\bm{u}}{\partial t}\cdot\n\int_t^\xi\bar{\varphi}(x,\tau)d\tau dt d\gamma -Re\int_{\Gamma}\int_0^\xi\frac{\partial \varphi}{\partial t}\n\cdot\int_t^\xi\bar{\bm{u}}(x,\tau)d\tau dt d\gamma\notag\\
={}&Re\int_{\Gamma}\int_0^\xi\bm{u}\cdot\n\bar{\varphi}(x,t)dt d\gamma -Re\int_{\Gamma}\int_0^\xi\varphi\n\cdot\bar{\bm{u}}(x,t) dt d\gamma\notag\\
={}&0.
\end{align}
Integrating (\ref{3.22}) from $t=0$ to $t=\xi$ and taking the real parts yields
\begin{align}\label{3.36}
&\frac{\rho_0}{2c^2}\Vert\varphi(\cdot,\xi)\Vert_{L^2(\Omega_R)}^2+\frac{\rho_e}{2}\Vert\bm{u}(\cdot,\xi)\Vert_{L^2(\Omega)^3}^2\notag\\
&+\frac{1}{2}\int_{\Omega}\Big[\mu(\int_0^\xi\nabla\bm{u}dt:\int_0^\xi\nabla\bar{\bm{u}}dt)+(\lambda+\mu)|\int_0^\xi\nabla\cdot\bm{u}dt|^2\Big]dx+\frac{\rho_0}{2}\int_{\Omega_R}|\int_0^\xi\nabla\varphi(x,t)dt|^2dx\notag\\
={}&Re\rho_0\int_0^\xi\int_{\Gamma_R^+}\mathscr{T}[\varphi]\cdot\bar{\Psi}_1d\gamma dt+Re\rho_0\int_0^\xi\int_{\Gamma_R^+}\rho\cdot\bar{\Psi}_1d\gamma dt.
\end{align}
In what follows, we estimate the two terms on the right-hand side of (\ref{3.36}) separately. 
First, using (\ref{3.26}) and Lemma \ref{lem3.5}, it holds that
\begin{align}\label{3.37}
&Re\rho_0\int_0^\xi\int_{\Gamma_R^+}\mathscr{T}[\varphi]\cdot\bar{\Psi}_1d\gamma dt\notag\\
={}&Re\rho_0\int_{\Gamma_R^+}\int_0^\xi\Big(\int_0^t\mathscr{T}[\varphi](x,\tau)d\tau\Big)\cdot\bar{\varphi}(x,t) dt d\gamma\notag \\
\leq{}&0.
\end{align}
For $0\leq t\leq\xi\leq T$, it is seen from (\ref{3.26}) that
\begin{align}\label{3.38}
&Re\int_0^\xi\int_{\Gamma_R^+}\rho\cdot\bar{\Psi}_1d\gamma dt\notag\\
={}&Re\int_0^\xi\Big(\int_0^t\int_{\Gamma_R^+}\rho(\cdot,\tau)d\gamma d\tau\Big)\bar{\varphi}(\cdot,t)dt\notag\\
\lesssim{}&\int_0^\xi\int_0^t\Vert\rho(\cdot,\tau)\Vert_{H^{-1/2}(\Gamma_R^+)}\Vert\varphi(\cdot,t)\Vert_{H^{1}(\Omega_R)}d\tau dt\notag\\
\lesssim{}&\Big(\int_0^\xi\Vert\rho(\cdot,t)\Vert_{H^{-1/2}(\Gamma_R^+)}dt\Big)\cdot\Big(\int_0^\xi\Vert\varphi(\cdot,t)\Vert_{H^{1}(\Omega_R)}dt\Big).
\end{align}
where we have used trace theorem in the second to last inequality.

Substituting (\ref{3.37})-(\ref{3.38}) into (\ref{3.36}), one has for any $\xi\in[0,T]$ that
\begin{align}\label{3.39}
&\frac{\rho_0}{2c^2}\Vert\varphi(\cdot,\xi)\Vert_{L^2(\Omega_R)}^2+\frac{\rho_e}{2}\Vert\bm{u}(\cdot,\xi)\Vert_{L^2(\Omega)^3}^2\notag\\
&+\frac{1}{2}\int_{\Omega}[\mu(\int_0^\xi\nabla\bm{u}dt:\int_0^\xi\nabla\bar{\bm{u}}dt)+(\lambda+\mu)|\int_0^\xi\nabla\cdot\bm{u}dt|^2]dx+\frac{\rho_0}{2}\int_{\Omega_R}|\int_0^\xi\nabla\varphi(x,t)dt|^2dx\notag\\
\lesssim{}&\Big(\int_0^\xi\Vert\rho(\cdot,t)\Vert_{H^{-1/2}(\Gamma_R^+)}dt\Big)\cdot\Big(\int_0^\xi\Vert\varphi(\cdot,t)\Vert_{H^{1}(\Omega_R)}dt\Big).
\end{align}
Noticing that the right-hand side of (\ref{3.39}) contains term
$$\int_0^\xi\Vert\varphi(\cdot,t)\Vert_{H^{1}(\Omega_R)}dt=\int_0^\xi\Big(\int_{\Omega_R}(|\varphi(\cdot,t)|^2+|\nabla\varphi(\cdot,t)|^2)dx\Big)^\frac{1}{2}dt,$$
we shall use the $\epsilon$-inequality to control terms $\Vert\varphi\Vert_{L^2(\Omega_R)}^2$ and $\Vert\nabla\varphi\Vert_{L^2(\Omega_R)}^2$. Consequently, a new reduced system need to be considered:
\begin{equation}\label{3.40}
\begin{cases}
\rho_e\partial_t^2(\partial_t\bm u)-\Delta^*(\partial_t\bm u)=0,& {\rm in}\;\; \Omega, t>0 \\
\partial_t^2(\partial_t\varphi)-c^2\Delta(\partial_t\varphi)=0,& {\rm in}\;\; \Omega_R, t>0\\
\partial_t^2\bm u\Big|_{t=0}=\frac{1}{\rho_e}\Delta^*\bm u\Big|_{t=0}=\textbf 0,& {\rm in}\;\;  \Omega\\
\partial_t^2\varphi\Big|_{t=0}=c^2\Delta\varphi\Big|_{t=0}=0,& {\rm in}\;\;  \Omega_R\\
\bm{\sigma}(\partial_t\bm{u})\n =-\rho_0\partial_t^2\varphi\n,& {\rm on}\;\; \Gamma, t>0 \\
\partial_t^2\bm{u}\cdot \n=-\partial_{\n}(\partial_t\varphi),& {\rm on}\;\; \Gamma, t>0\\
\partial_t\varphi=0,& {\rm on}\;\;\Gamma_0, t>0\\
\partial_{\n} (\partial_t\varphi)=\mathscr{T}[\partial_t\varphi]+\partial_t\rho,& {\rm on}\;\; \Gamma_R^+, t>0.
\end{cases}
\end{equation}
Following the same steps as in deriving (\ref{3.22}), we obtain the variational formulation of (\ref{3.40}):
\begin{align}\label{3.41}
&\int_\Omega\rho_e\partial_t^2(\partial_t\bm{u})\cdot\bar{\bm{v}}dx+\int_{\Omega_R}\frac{\rho_0}{c^2}\partial_t^2(\partial_t\varphi)\cdot\bar{\psi}dx\notag\\
={}&-\int_\Omega\Big[\mu(\nabla(\partial_t\bm{u}):\nabla \bar{\bm{v}})+(\lambda+\mu)(\nabla\cdot (\partial_t\bm{u}))(\nabla\cdot \bar{\bm{v}})\Big]dx
-\rho_0\int_{\Omega_R}\nabla(\partial_t\varphi)\cdot\nabla\bar{\psi}dx\notag\\
&+\rho_0\int_{\Gamma_R^+}\mathscr{T}[\partial_t\varphi]\cdot\bar{\psi}d\gamma
+\rho_0\int_{\Gamma_R^+}\partial_t\rho\cdot\bar{\psi}d\gamma+\rho_0\int_{\Gamma}(\partial_t^2\bm{u}\cdot\n\bar{\psi}-\partial_t^2 \varphi\n\cdot\bar{\bm{v}})d\gamma.
\end{align}
Define two related auxiliary functions
\begin{align*}
&\Psi_3(x,t)=\int_t^\xi\partial_\tau\varphi(x,\tau)d\tau,\quad x\in\Omega_R,0\leq t\leq\xi\leq T,\\
&\Psi_4(x,t)=\int_t^\xi\partial_\tau\bm{u}(x,\tau)d\tau,\quad x\in\Omega,0\leq t\leq\xi\leq T.
\end{align*}
By the integration by parts, it can be similarly concluded that
\begin{align}
&Re\frac{\rho_0}{c^2}\int_0^\xi\int_{\Omega_R}\partial_t^2(\partial_t\varphi)\cdot\bar{\Psi}_3dxdt=\frac{\rho_0}{2c^2}\Vert\partial_t\varphi(\cdot,\xi)\Vert_{L^2(\Omega_R)}^2,\\
&Re\rho_0\int_0^\xi\int_{\Omega_R}\nabla(\partial_t\varphi)\cdot\nabla\bar{\Psi}_3dxdt=\frac{\rho_0}{2}\Vert\nabla\varphi(\cdot,\xi)\Vert_{L^2(\Omega_R)^3}^2,\\
&Re\rho_e\int_0^\xi\int_{\Omega}\partial_t^2(\partial_t\bm{u})\cdot\bar{\Psi}_4dxdt=\frac{\rho_e}{2}\Vert\partial_t\bm{u}(\cdot,\xi)\Vert_{L^2(\Omega)^3}^2,\\
&Re\int_0^\xi\int_{\Omega}\Big[\mu(\nabla\partial_t\bm{u}:\nabla\bar{\Psi}_4)+(\lambda+\mu)(\nabla\cdot\partial_t\bm{u})(\nabla\cdot\bar{\Psi}_4)\Big]dxdt\notag\\
={}&\frac{1}{2}\mu\Vert\nabla\bm{u}(\cdot,\xi)\Vert_{F(\Omega)}^2+\frac{1}{2}(\lambda+\mu)\Vert\nabla\cdot\bm{u}(\cdot,\xi)\Vert_{L^2(\Omega)}^2,\\
&Re\int_0^\xi\int_{\Gamma}\partial_t^2\bm u\cdot\n\bar{\Psi}_3d\gamma dt-Re\int_0^\xi\int_{\Gamma}\partial_t^2\varphi\n\cdot\bar{\Psi}_4(x,\tau)d\gamma dt=0.
\end{align}
Choosing test functions $\psi=\Psi_3$ and $\bm v=\Psi_4$ in (\ref{3.41}), integrating it from $t=0$ to $t=\xi$ and taking the real parts yields
\begin{align}\label{3.46}
&\frac{\rho_0}{2c^2}\Vert\partial_t\varphi(\cdot,\xi)\Vert_{L^2(\Omega_R)}^2+\frac{\rho_0}{2}\Vert\nabla\varphi(\cdot,\xi)\Vert_{L^2(\Omega_R)^3}^2+\frac{\rho_e}{2}\Vert\partial_t\bm{u}(\cdot,\xi)\Vert_{L^2(\Omega)^3}^2\notag\\
&+\frac{1}{2}\mu\Vert\nabla\bm{u}(\cdot,\xi)\Vert_{F(\Omega)}^2+\frac{1}{2}(\lambda+\mu)\Vert\nabla\cdot\bm{u}(\cdot,\xi)\Vert_{L^2(\Omega)}^2\notag\\
={}&Re\rho_0\int_0^\xi\int_{\Gamma_R^+}\mathscr{T}[\partial_t\varphi]\cdot\bar{\Psi}_3d\gamma dt+Re\rho_0\int_0^\xi\int_{\Gamma_R^+}\partial_t\rho\cdot\bar{\Psi}_3d\gamma dt\notag\\
\lesssim{}&Re\int_0^\xi\int_{\Gamma_R^+}\partial_t\rho\cdot\bar{\Psi}_3d\gamma dt\notag\\
={}&Re\int_0^\xi\Big(\int_0^t\int_{\Gamma_R^+}\partial_\tau\rho(\cdot,\tau)d\gamma d\tau\Big)\partial_t\bar{\varphi}(\cdot,t)dt\notag\\
={}&Re\int_{\Gamma_R^+}\int_0^t\partial_\tau\rho(\cdot,\tau)d\tau\cdot\bar{\varphi}(\cdot,t)\Big|_{t=0}^{t=\xi}d\gamma-Re\int_0^\xi\int_{\Gamma_R^+}\partial_t\rho(\cdot,t)\bar{\varphi}(\cdot,t)d\gamma dt\notag\\
\lesssim{}&\int_0^\xi\Vert\partial_t\rho(\cdot,t)\Vert_{H^{-1/2}(\Gamma_R^+)}\Vert\varphi(\cdot,t)\Vert_{H^{1}(\Omega_R)}dt.
\end{align}
where we have used (\ref{3.26}) and Lemma \ref{lem3.6} to make sure
$$Re\rho_0\int_0^\xi\int_{\Gamma_R^+}\mathscr{T}[\partial_t\varphi]\cdot\bar{\Psi}_3d\gamma dt\leq 0,$$
which is similar to (\ref{3.37}).

Combining (\ref{3.39}) and (\ref{3.46}), we have that
\begin{align}\label{3.49}
&\Vert\varphi(\cdot,\xi)\Vert_{L^2(\Omega_R)}^2+\Vert\partial_t\varphi(\cdot,\xi)\Vert_{L^2(\Omega_R)}^2+\Vert\nabla\varphi(\cdot,\xi)\Vert_{L^2(\Omega_R)^3}^2\notag\\
&+\Vert\bm{u}(\cdot,\xi)\Vert_{L^2(\Omega)^3}^2+\Vert\partial_t\bm{u}(\cdot,\xi)\Vert_{L^2(\Omega)^3}^2+\Vert\nabla\bm{u}(\cdot,\xi)\Vert_{F(\Omega)}^2+\Vert\nabla\cdot\bm{u}(\cdot,\xi)\Vert_{L^2(\Omega)}^2\notag\\
\lesssim{}&\Big(\int_0^\xi\Vert\rho(\cdot,t)\Vert_{H^{-1/2}(\Gamma_R^+)}dt\Big)\cdot\Big(\int_0^\xi\Vert\varphi(\cdot,t)\Vert_{H^{1}(\Omega_R)}dt\Big)\notag\\
&+\int_0^\xi\Vert\partial_t\rho(\cdot,t)\Vert_{H^{-1/2}(\Gamma_R^+)}\Vert\varphi(\cdot,t)\Vert_{H^{1}(\Omega_R)}dt.
\end{align}
Taking the $L^{\infty}$-norm in both sides of (\ref{3.49}) with respect to $\xi$, and using the $\epsilon$-inequality, we obtain
\begin{align}\label{3.51}
&\Vert\varphi\Vert_{L^{\infty}(0,T;L^2(\Omega_R))}^2+\Vert\partial_t\varphi\Vert_{L^{\infty}(0,T;L^2(\Omega_R))}^2+\Vert\nabla\varphi\Vert_{L^{\infty}(0,T;L^2(\Omega_R)^3)}^2\notag\\
&+\Vert\bm{u}\Vert_{L^{\infty}(0,T;L^2(\Omega)^3)}^2+\Vert\partial_t\bm{u}\Vert_{L^{\infty}(0,T;L^2(\Omega)^3)}^2+\Vert\nabla\bm{u}\Vert_{L^{\infty}(0,T;F(\Omega))}^2+\Vert\nabla\cdot\bm{u}\Vert_{L^{\infty}(0,T;L^2(\Omega))}^2\notag\\
\lesssim{}&T^2\Vert\rho\Vert_{L^{1}(0,T;H^{-1/2}(\Gamma_R^+))}^2+\Vert\partial_t\rho\Vert_{L^{1}(0,T;H^{-1/2}(\Gamma_R^+))}^2,
\end{align}
which implies estimate (\ref{3.23}) after applying Cauchy-Schwartz inequality.

Integrating (\ref{3.49}) with respect to $\xi$ from $0$ to $T$ and using Cauchy-Schwartz inequality, we have
\begin{align}\label{3.53}
&\Vert\varphi\Vert_{L^{2}(0,T;L^2(\Omega_R))}^2+\Vert\partial_t\varphi\Vert_{L^{2}(0,T;L^2(\Omega_R))}^2+\Vert\nabla\varphi\Vert_{L^{2}(0,T;L^2(\Omega_R)^3)}^2\notag\\
&+\Vert\bm{u}\Vert_{L^{2}(0,T;L^2(\Omega)^3)}^2+\Vert\partial_t\bm{u}\Vert_{L^{2}(0,T;L^2(\Omega)^3)}^2+\Vert\nabla\bm{u}\Vert_{L^{2}(0,T;F(\Omega))}^2+\Vert\nabla\cdot\bm{u}\Vert_{L^{2}(0,T;L^2(\Omega))}^2\notag\\
\lesssim{}&T^3\Vert\rho\Vert_{L^{1}(0,T;H^{-1/2}(\Gamma_R^+))}+T\Vert\partial_t\rho\Vert_{L^{1}(0,T;H^{-1/2}(\Gamma_R^+))},
\end{align}
where we have used $\epsilon$-inequality again in deriving (\ref{3.53}). This implies estimate (\ref{3.24}) by applying the Cauchy-Schwartz inequality.
\end{proof}

\begin{remark}\label{rm3.8}{\rm
By similar discussions, main results obtained in this paper can be easily extended to the time-dependent fluid-solid interaction problem with the case of an incident point source wave as well as other boundary
condition such as the Neumann boundary condition.
 }
\end{remark}

\appendix
\section{Laplace transform}\label{ap1}
For each $s\in\C_+$, the Laplace transform of the vector field $\bm{u}(t)$ is defined as:
$$\check{\bm{u}}(s)=\mathscr{L}(\bm{u})(s)=\int_0^{\infty}e^{-st}\bm{u}(t)dt.$$
Some related properties on the Laplace transform and its inversion are summarized as
\begin{align}\label{A.1}
\mathscr{L}(\frac{d\bm{u}}{dt})(s)=s\mathscr{L}(\bm{u})(s)-\bm{u}(0),\tag{A.1}\\ \label{A.2}
\mathscr{L}(\frac{d^2\bm{u}}{dt^2})(s)=s^2\mathscr{L}(\bm{u})(s)-s\bm{u}(0)-\frac{d\bm{u}}{dt}(0),\tag{A.2}\\ \label{A.3}
\int_0^t\bm{u}(\tau)d\tau=\mathscr{L}^{-1}(s^{-1}\check{\bm{u}})(s),\tag{A.3}
\end{align}
which can be easily verified from the integration by parts.

Next, we present the relation between Laplace and Fourier transform.
According to the definition on the Fourier transform, it holds
$$\mathscr{F}(\bm{u}(\cdot)e^{-s_1\cdot})=\int_{-\infty}^{+\infty}\bm{u}(t)e^{-s_1t}e^{-is_2t}dt=\int_{0}^{\infty}\bm{u}(t)e^{-(s_1+is_2)t}dt=\mathscr{L}(\bm{u})(s_1+is_2).$$
We can verify from the formula of the inverse Fourier transform that
$$\bm{u}(t)e^{-s_1t}=\mathscr{F}^{-1}\{\mathscr{F}(\bm{u}(\cdot)e^{-s_1\cdot})\}=\mathscr{F}^{-1}\Big(\mathscr{L}(\bm{u}(s_1+is_2))\Big),$$
which implies that
\begin{align}
    \bm{u}(t)=\mathscr{F}^{-1}\Big(e^{s_1t}\mathscr{L}(\bm{u}(s_1+is_2))\Big),\tag{A.4}\label{A.4}
\end{align}
where $\mathscr{F}^{-1}$ denotes the inverse Fourier transform with respect to $s_2.$
\begin{lem}[Parseval identity]
    If $\check{\bm{u}}=\mathscr{L}(\bm{u})$ and $\check{\bm{v}}=\mathscr{L}(\bm{v})$, then
    \begin{align}
        \frac{1}{2\pi}\int_{-\infty}^{\infty}\check{\bm{u}}(s)\cdot\check{\bm{v}}(s)ds_2=\int_0^{\infty}e^{-2s_1t}\bm{u}(t)\cdot\bm{v}(t)dt.\tag{A.5}\label{A.5}
    \end{align}
    \noindent for all $s_1>\lambda$ where $\lambda$ is the abscissa of convergence for the Laplace transform of $\bm{u}$ and $\bm{v}$.
\end{lem}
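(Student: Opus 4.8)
The plan is to obtain \eqref{A.5} as a direct instance of the classical Plancherel (Parseval) identity for the Fourier transform, using the Laplace--Fourier bridge already recorded above. Writing $s=s_1+is_2$, that relation reads $\check{\bm{u}}(s)=\mathscr{L}(\bm{u})(s_1+is_2)=\mathscr{F}\big(\bm{u}(\cdot)e^{-s_1\cdot}\big)(s_2)$, and similarly for $\check{\bm{v}}$. Thus the two factors on the left-hand side of \eqref{A.5} are precisely the Fourier transforms, in the variable $s_2$, of the exponentially weighted signals $g(t):=\bm{u}(t)e^{-s_1t}$ and $h(t):=\bm{v}(t)e^{-s_1t}$, where the dot denotes the (Hermitian) inner product so that the required conjugations are built into the pairing.

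First I would check the integrability that licenses Plancherel. Since $\bm{u}$ and $\bm{v}$ are causal, that is, supported in $[0,\infty)$, and $s_1>\lambda$ strictly exceeds the abscissa of convergence, the weight $e^{-s_1t}$ forces $g,h\in L^2(0,\infty)\subset L^2(\R)$; this is exactly where the hypothesis $s_1>\lambda$ enters. Consequently their Fourier transforms $\hat g=\check{\bm{u}}(s_1+i\,\cdot)$ and $\hat h=\check{\bm{v}}(s_1+i\,\cdot)$ also lie in $L^2(\R)$, so the Plancherel identity $\frac{1}{2\pi}\int_{-\infty}^{\infty}\hat g\cdot\hat h\,ds_2=\int_{-\infty}^{\infty}g\cdot h\,dt$ applies verbatim.

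Next I would substitute the definitions of $g$ and $h$. On the left this recovers $\frac{1}{2\pi}\int_{-\infty}^{\infty}\check{\bm{u}}(s)\cdot\check{\bm{v}}(s)\,ds_2$, while on the right the product of the two weights combines to $e^{-2s_1t}$, giving $\int_{-\infty}^{\infty}e^{-2s_1t}\,\bm{u}(t)\cdot\bm{v}(t)\,dt$; causality then collapses the lower limit to $0$, yielding exactly the right-hand side of \eqref{A.5}. The only genuine point requiring care is the $L^2$-membership of the weighted signals: one must confirm that choosing $s_1$ beyond the abscissa of convergence makes $\bm{u}(t)e^{-s_1t}$ decay fast enough to be square-integrable, so that the $L^2$ Plancherel theorem (rather than merely the $L^1$ transform theory) is available. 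Once that is secured, the identity follows by the substitution above.
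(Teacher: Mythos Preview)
The paper does not actually supply a proof of this lemma; it is stated in Appendix~A without proof. Your argument---identifying $\check{\bm{u}}(s_1+is_2)$ with the Fourier transform in $s_2$ of the exponentially weighted signal $\bm{u}(t)e^{-s_1t}$ (which is exactly the relation the paper records in the display immediately preceding the lemma) and then invoking the Plancherel identity for the Fourier transform---is the standard and correct route, and it is clearly the one the paper intends the reader to have in mind.

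The only point that could be tightened is the one you yourself flag: taking $s_1$ strictly larger than the abscissa of convergence guarantees that $e^{-s_1t}\bm{u}(t)$ lies in $L^1(0,\infty)$, but not automatically in $L^2(0,\infty)$, which is what the $L^2$-Plancherel theorem requires. One either assumes a mild extra condition (local square-integrability together with at most exponential growth suffices, and then any $s_1$ beyond the growth exponent gives $L^2$) or passes through a density argument. In every place the paper actually invokes (A.5) the functions in question satisfy such a condition, so this is a technicality rather than a gap.
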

\begin{lem}{\rm(cf.\cite[Theorem 43.1]{treves1975})}
    Let $\check{\bm{\omega}}(s)$ denotes a holomorphic function in the half plane $s_1>\sigma_0$, valued in the Banach space $\E$. The following statements are equivalent:
    \begin{enumerate}
      \item[1)] there is a distribution $\omega\in\mathcal{D}_+^{'}(\E)$ whose Laplace transform is equal to $\check{\bm{\omega}}(s)$;
      \item[2)] there is a $\sigma_1$ with $\sigma_0\leq\sigma_1<\infty$ and an integer $m\geq0$ such that for all complex numbers $s$ with $s_1>\sigma_1$ , it holds that $\Vert\check{\bm{\omega}}(s)\Vert_{\E} \lesssim(1+|s|)^m,$ where $\mathcal{D}_+^{'}(\E)$ is the space of distributions on the real line which vanish identically in the open negative half line.
    \end{enumerate}
\end{lem}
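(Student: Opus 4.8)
The plan is to prove the stated equivalence as a vector-valued Paley--Wiener--Schwartz characterization of Laplace transforms of causal distributions, handling the two implications separately. The implication $1)\Rightarrow 2)$ is the routine direction; the substantive work lies in $2)\Rightarrow 1)$, where a holomorphic function with polynomial growth must be realized as the Laplace transform of some $\omega\in\mathcal{D}_+^{'}(\E)$. Throughout I would use the working definition that ``$\omega\in\mathcal{D}_+^{'}(\E)$ admits a Laplace transform convergent for $s_1>\sigma_0$'' means that $e^{-\sigma t}\omega$ is a tempered $\E$-valued distribution for every $\sigma>\sigma_0$.

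For $1)\Rightarrow 2)$, I would argue as follows. Since $e^{-\sigma t}\omega$ is tempered for $\sigma>\sigma_0$, it is of finite order, say $m$, once $\sigma>\sigma_1$ for some $\sigma_1\geq\sigma_0$, so it satisfies a seminorm estimate against Schwartz functions involving derivatives up to order $m$. Pairing $\omega$ against $e^{-st}$ (legitimate because $\mathrm{supp}\,\omega\subset[0,\infty)$ and $|e^{-st}|=e^{-s_1t}$ decays for $s_1>\sigma_1$), and noting that each $t$-derivative of $e^{-st}$ produces a factor $-s$, the order-$m$ bound immediately yields $\Vert\check{\bm\omega}(s)\Vert_{\E}\lesssim(1+|s|)^m$, which is exactly $2)$.

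For $2)\Rightarrow 1)$, the idea is to gain integrability by division and then invert. Fix $a\in\C$ with $\Rt(a)>\sigma_1$ and set $\check{\psi}(s):=\check{\bm\omega}(s)/(s+a)^{m+2}$, which is holomorphic on $s_1>\sigma_1$ and satisfies $\Vert\check{\psi}(s)\Vert_{\E}\lesssim(1+|s|)^{-2}$ along each vertical line. Define the Bromwich integral
$$\psi(t):=\frac{1}{2\pi i}\int_{\sigma-i\infty}^{\sigma+i\infty}e^{st}\check{\psi}(s)\,ds,\qquad \sigma>\sigma_1,$$
which converges absolutely and, by Cauchy's theorem combined with the uniform $(1+|s|)^{-2}$ bound, is independent of the abscissa $\sigma$; this produces a continuous $\E$-valued function of at most exponential growth. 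The crucial causality claim is that $\psi(t)=0$ for $t<0$: for such $t$ one lets $\sigma\to+\infty$, and since $|e^{st}|=e^{\sigma t}\to0$ while the integrand stays integrable uniformly in $\sigma$, the integral vanishes, so $\mathrm{supp}\,\psi\subset[0,\infty)$ and $\psi\in\mathcal{D}_+^{'}(\E)$. Finally I would set $\omega:=(\tfrac{d}{dt}+a)^{m+2}\psi$, which again lies in $\mathcal{D}_+^{'}(\E)$; applying the differentiation rule (\ref{A.1}) repeatedly gives $\mathscr{L}(\omega)(s)=(s+a)^{m+2}\check{\psi}(s)=\check{\bm\omega}(s)$, completing the reconstruction.

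The main obstacle is the contour-shift argument underlying $2)\Rightarrow 1)$: establishing both the $\sigma$-independence of $\psi$ and the support property $\mathrm{supp}\,\psi\subset[0,\infty)$. Both rest on controlling the horizontal edges of the relevant rectangular contours as $|s_2|\to\infty$ and as $\sigma\to+\infty$, and this is precisely why one divides by $(s+a)^{m+2}$, two powers beyond the growth exponent $m$, so that the inverse integral becomes absolutely convergent and the edge contributions vanish. Since the assertion is classical (Tr\`eves, Theorem 43.1), in the paper I would simply cite it, but the reconstruction above is the route I would follow to verify it.
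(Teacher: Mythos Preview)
The paper does not supply its own proof of this lemma; it is stated in Appendix~A with the citation ``cf.~\cite[Theorem 43.1]{treves1975}'' and no argument, as it is a classical Paley--Wiener--Schwartz type result quoted for later use. Your sketch is a correct outline of the standard proof (essentially the one in Tr\`eves): the forward direction via the finite-order bound on $e^{-\sigma t}\omega$, and the converse by dividing out $(s+a)^{m+2}$ to force absolute convergence of the Bromwich integral, establishing causality by contour shifting, and then recovering $\omega$ by applying $(\tfrac{d}{dt}+a)^{m+2}$. One small remark: when you invoke (\ref{A.1}) at the end, note that for distributions in $\mathcal{D}_+^{'}(\E)$ the correct rule is $\mathscr{L}(D\psi)=s\mathscr{L}(\psi)$ with no boundary term (the formula (\ref{A.1}) as written is the classical one-sided version for smooth functions), so the identity $\mathscr{L}(\omega)=(s+a)^{m+2}\check\psi$ follows directly in the distributional sense.
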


\section*{Acknowledgements}

This work was supported by NNSF of China under Grant No. 11771349, by ``the Fundamental Research Funds for the Central Universities" under Grant No. 1191329813, by the China Postdoctoral Science Foundation under Grants No. 2015M580827 and No. 2016T90900, and by Postdoctoral research project of Shaanxi Province of China under Grant No. 2016BSHYDZZ52.


\end{document}